\theoremstyle{plain}
\newtheorem{theorem}{Theorem}[section]
\newtheorem{proposition}{Proposition}[section]
\newtheorem{lemma}{Lemma}[section]
\newtheorem{conjecture}{Conjecture}[section]
\newtheorem{problem}{Problem}[section]
\theoremstyle{definition}
\newtheorem{definition}{Definition}[section]
\theoremstyle{remark}
\newtheorem{remark}{Remark}[section]
\newtheorem*{acknowledgment}{Acknowledgment}
\title{An eigenvalue estimate for the $\bar{\partial}$-Laplacian associated to a nef line bundle}
\author{Jingcao Wu}
\begin{document}
\pagestyle{plain}
\begin{abstract}
We study the $\bar{\partial}$-Laplacian on forms taking values in $L^{k}$, a high power of a nef line bundle on a compact complex manifold, and give an estimate of the number of the eigenforms whose corresponding eigenvalues smaller than or equal to $\lambda$. In particular, the $\lambda=0$ case gives an asymptotic estimate for the order of the corresponding cohomology groups. It helps to generalize the Grauert--Riemenschneider conjecture. At last, we discuss the $\lambda=0$ case on a pseudo-effective line bundle.
\end{abstract}
\maketitle

\section{Introduction}
Let $X$ be a compact complex manifold of dimension $n$, and let $L$ be a holomorphic line bundle on $X$. Fix Hermitian metrics on $X$ and $L$ respectively, the classic geometry theory allows us to define the adjoint operator $\bar{\partial}^{\ast}$ of the $\bar{\partial}$-operator acting on $L$-valued forms as well as the corresponding Laplacian operator
\begin{equation*}
    \Delta=\bar{\partial}\bar{\partial}^{\ast}+\bar{\partial}^{\ast}\bar{\partial}.
\end{equation*}

The theory of the elliptic partial differential operator applies on this Laplacian operator, and it has a good counterpart in geometry due to Hodge's theorem. More specifically, the classic Hodge's theorem says that any class $[\alpha]$ in the Dolbeault cohomology group $H^{p,q}(X,L)$ owns a unique harmonic representative $\tilde{\alpha}$, i.e. $\tilde{\alpha}\in[\alpha]$ and $\Delta\tilde{\alpha}=0$. In other word, if we denote the space of harmonic $L$-valued $(p,q)$-forms by $\mathcal{H}^{p,q}(X,L)$, we have
\[
\mathcal{H}^{p,q}(X,L)\simeq H^{p,q}(X,L).
\]
It is worth to mention that when $L$ is ample, it leads to excellent results in geometry, such as the hard Lefschetz theorem.

In this paper, $L$ is taken as a nef line bundle, and we are interested in the quantity of eigenforms of the Laplacian on $L$. The work in this aspect dates back to \cite{Ber02}. Indeed, if we denote the linear space of the $L$-valued $(n,q)$-eigenforms of $\Delta$, with corresponding eigenvalues smaller than or equal to $\lambda$ by
\[
    \mathcal{H}^{n,q}_{\leqslant\lambda}(X,L),
\]
it is given in \cite{Ber02} an asymptotic estimate for
\[
    h^{n,q}_{\leqslant\lambda}(L^{k}\otimes E)=\dim \mathcal{H}^{n,q}_{\leqslant\lambda}(X,L^{k}\otimes E)
\]
in the case that $L$ is a semi-positive line bundle and $E$ is a line bundle on $X$. Also it is shown in \cite{Ber02} through an example (Proposition 4.2) that this estimate is sharp.

In this paper, we will prove a similar result when $L$ is nef. We first introduce a canonical way to define the Laplacian on a nef line bundle in the text as well as the eigenform space $\mathcal{H}^{p,q}_{\leqslant\lambda}$ (Definition \ref{d21}, Sect.2). Let
\[
h^{n,q}_{\leqslant\lambda}(L^{k}\otimes E)=\dim\mathcal{H}^{n,q}_{\leqslant\lambda}(X,L^{k}\otimes E).
\]

Then we define the so-called modified ideal sheaf (Definition \ref{d22}, Sect.2). We give a brief introduction here for readers' convenience. Notice that when $L$ is nef, for any $\varepsilon>0$, there exits a smooth metric $h_{\varepsilon}$ on $L$ such that $i\Theta_{L,h_{\varepsilon}}\geqslant-\varepsilon\omega$. Here $\omega$ is a Hermitian metric on $X$. So there is an $L^{1}$-limit $h_{0}$ of $h_{\varepsilon}$ (after passing to a subsequence) with $i\Theta_{L,h_{0}}\geqslant0$. In the rest part, $h_{0}$ will always refer to such a metric.

Now we furthermore assume that $h_{0}$ has analytic singularities. In this case the associated multiplier ideal sheaf can be computed as follows in \cite{Dem12}:
\[
\mathscr{I}(h_{0})=\mu_{\ast}\mathcal{O}_{\tilde{X}}(\sum(\rho_{j}-\lfloor a\lambda_{j}\rfloor)D_{j}).
\]
Here $\mu:\tilde{X}\rightarrow X$ is a log-resolution, and $\rho_{j},a,\lambda_{j}$ are involved real numbers. $\lfloor a\lambda_{j}\rfloor$ means the round down. The precise meaning of these notations will be clarified in the text. The modified ideal sheaf is then defined as
\[
\mathfrak{I}(h_{0}):=\mu_{\ast}\mathcal{O}_{\tilde{X}}(\sum(-\lceil a\lambda_{j}\rceil)D_{j}).
\]
$\lceil a\lambda_{j}\rceil$ refers to the round up. We will also denote it by $\mathfrak{I}(L)$. It is not hard to see that
\[
\mathfrak{I}(L)=\mathscr{I}(\psi)
\] 
for another function induced by $h_{0}$. For any vector bundle $E$ we define
\[
\mathcal{H}^{p,q}_{\leqslant\lambda}(L^{k}\otimes E\otimes\mathfrak{I}(L^{k})):=\{\alpha\in\mathcal{H}^{p,q}_{\leqslant\lambda}(L^{k}\otimes E);\int_{X}|\alpha|^{2}e^{-\psi}<\infty\}.
\]

Then we prove that
\begin{theorem}\label{t1}
Let $L$ be a nef line bundle on $X$, and let $E$ be a vector bundle. Assume that $h_{0}$ has analytic singularities. Take $q\geqslant1$ and $m>0$. Then, if $0\leqslant\lambda\leqslant k$,
\begin{equation}\label{e1}
    h^{n,q}_{\leqslant\lambda}(L^{k}\otimes E\otimes\mathfrak{I}(L^{k}))\leqslant C(\lambda+1)^{q}k^{n-q}.
\end{equation}
If $1\leqslant k\leqslant\lambda$, then
\begin{equation}\label{e2}
    h^{n,q}_{\leqslant\lambda}(L^{k}\otimes E\otimes\mathfrak{I}(L^{k}))\leqslant Ck^{n}.
\end{equation}
\end{theorem}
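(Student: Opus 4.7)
My plan is to reduce the nef case to Berndtsson's estimate for semi-positive line bundles in \cite{Ber02} by passing to a log-resolution $\mu:\tilde{X}\to X$ of the analytic singularities of $h_0$. On $\tilde X$, the pullback weight $\mu^*\phi_0$ has normal-crossings singularities of the shape $\sum_j a\lambda_j\log|s_j|^{-2}+\tilde\phi_0$ with $\tilde\phi_0$ smooth and $s_j$ the canonical section of $\mathcal{O}(D_j)$. I would absorb the singular part into a twist, working with the auxiliary line bundle $\tilde L_k:=\mu^*L^k\otimes\mathcal{O}_{\tilde X}\bigl(-\sum_j\lceil ak\lambda_j\rceil D_j\bigr)$, which carries an induced smooth Hermitian metric $\tilde h_{k,\varepsilon}$ satisfying $i\Theta_{\tilde L_k,\tilde h_{k,\varepsilon}}\geqslant-\varepsilon k\,\mu^*\omega-C\tilde\omega$ for a fixed Kähler metric $\tilde\omega$ on $\tilde X$ and a constant $C$ independent of $k$ and $\varepsilon$.

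The second step is to exhibit an injection
\[
\mathcal{H}^{n,q}_{\leqslant\lambda}(L^{k}\otimes E\otimes\mathfrak{I}(L^{k}))\hookrightarrow\mathcal{H}^{n,q}_{\leqslant C'\lambda}\bigl(\tilde L_k\otimes\mu^*E\otimes K_{\tilde X/X}\bigr)
\]
via pullback, with $C'$ independent of $k$ and $\lambda$. The ceiling $\lceil ak\lambda_j\rceil$ in the definition of $\mathfrak{I}(L^k)$ is exactly the integrality threshold that makes the weighted $L^2$ condition $\int|\alpha|^2e^{-\psi}<\infty$ on $X$ translate into the unweighted $L^2$ condition for the corresponding $\tilde L_k$-valued form on $\tilde X$. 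The eigenvalue comparison then follows from the min-max characterization: $\mu^*$ distorts the quadratic form $\|\bar\partial\alpha\|^2+\|\bar\partial^*\alpha\|^2$ by a multiplicative constant away from a codimension-two set (which contributes nothing in $L^2$), so eigenvalues are preserved up to a bounded factor.

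Once upstairs, I would apply Berndtsson's eigenvalue estimate to $\tilde L_k\otimes\mu^*E\otimes K_{\tilde X/X}$: the curvature lower bound above is exactly of the form required in \cite{Ber02} after choosing $\varepsilon$ small relative to $k^{-1}$, and his theorem yields a bound of the form $C(\lambda+1)^qk^{n-q}$ in the regime $\lambda\leqslant k$ and $Ck^n$ in the regime $k\leqslant\lambda$. Combining this with the injection above and letting $\varepsilon\to 0$ gives \eqref{e1} and \eqref{e2}.

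The main obstacle, in my view, is the eigenform correspondence between $X$ and $\tilde X$. Cohomological birational invariance (for $K_X$ twisted by multiplier ideals) is classical, but the Laplacian spectrum is not a birational invariant, and the min-max argument has to be carried out uniformly in $k$ and $\lambda$. The delicate point is to control the behaviour of the formal adjoint $\bar\partial^*$ under $\mu^*$ near the exceptional divisor in the correct weighted $L^2$ spaces, so that the quadratic form comparison is valid globally on $\tilde X$ with constants independent of the asymptotic parameters.
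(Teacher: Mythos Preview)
Your route is genuinely different from the paper's, and the obstacle you flag at the end is a real gap, not a technicality.

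The paper never leaves $X$. It proves a pointwise submeanvalue inequality for $\bar\partial$-closed elements of $\mathcal{H}^{n,q}_{\leqslant\lambda}$ directly: Proposition~\ref{p31} applies Siu's $\partial\bar\partial$-Bochner formula (Proposition~\ref{p27}) with the smooth approximants $h_\varepsilon$, derives a differential inequality for $\sigma(r,\varepsilon)=\int_{|z|<r}|\alpha|^2_{h^k_\varepsilon}$, and passes to the limit $\varepsilon\to0$ to obtain $\int_{|z|<r}|\alpha|^2_{h^k_0}\leqslant Cr^{2q}(\lambda+1)^q\|\alpha\|^2$. Proposition~\ref{p32} then rescales $z\mapsto z/\sqrt{k}$ and invokes G{\aa}rding plus Sobolev to convert this into a pointwise bound $|\alpha(x)|^2_{h^k_0}\leqslant Ck^{n-q}(\lambda+1)^q\|\alpha\|^2$; the analytic-singularities hypothesis is used here only to ensure that $|\alpha|^2_{h^k_0}$ vanishes at the poles (Proposition~\ref{p22}), so the sup can be taken over all of $X$. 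Finally, Lemma~\ref{l22} turns the sup bound into a dimension bound for the $\bar\partial$-closed subspace $\mathcal{Z}^{n,q}_{\leqslant\lambda}$, and the decomposition $\mathcal{H}^{n,q}_{\leqslant\lambda}=\mathcal{Z}^{n,q}_{\leqslant\lambda}\oplus(\mathcal{H}^{n,q}_{\leqslant\lambda}\ominus\mathcal{Z}^{n,q}_{\leqslant\lambda})$ together with the injection $\bar\partial:\mathcal{H}^{n,q}_{\leqslant\lambda}\ominus\mathcal{Z}^{n,q}_{\leqslant\lambda}\hookrightarrow\mathcal{Z}^{n,q+1}_{\leqslant\lambda}$ finishes the proof.

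Your blowup strategy runs into two concrete problems. First, the degeneration of $\mu^*\omega$ occurs along the exceptional divisor, which has codimension \emph{one} in $\tilde X$, not two; the assertion that this ``contributes nothing in $L^2$'' is false for the quantities you need. For $(n,q)$-forms one has the one-sided inequality $|\alpha|^2_{\tilde\omega}\,dV_{\tilde\omega}\leqslant|\alpha|^2_{\mu^*\omega}\,dV_{\mu^*\omega}$ when $\tilde\omega\geqslant\mu^*\omega$, but this goes the wrong way for bounding the Rayleigh quotient upstairs by the one downstairs, and there is no reverse inequality with a constant uniform in $k$. Moreover $\bar\partial^*_{\tilde\omega}$ and $\bar\partial^*_{\mu^*\omega}$ are different operators near the exceptional set, so the quadratic forms are not comparable by a multiplicative constant. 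Second, and more structurally, the eigenforms here are those of Definition~\ref{d21}: $\alpha$ is an eigenform with eigenvalue $\mu$ if for every small $\varepsilon$ there is a representative $\alpha_\varepsilon\in[\alpha]$ with $\Delta_\varepsilon\alpha_\varepsilon=\mu\alpha_\varepsilon$, the \emph{same} $\mu$ for all $\varepsilon$. This is not the spectrum of a single self-adjoint operator, so the variational min-max principle has no direct purchase on $\mathcal{H}^{n,q}_{\leqslant\lambda}$ as defined, and the injection you propose has no evident meaning on the $\tilde X$ side.
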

Since $E$ is allowed to be an arbitrary vector bundle, we see by substituting $E\otimes\Omega^{p}_{X}\otimes K^{-1}_{X}$ for $E$, that the same asymptotic estimate also holds for the numbers $h^{p,q}_{\leqslant\lambda}$.

This kind of estimate has an important application in geometry, especially the $\lambda=0$ case. In fact, $\mathcal{H}^{n,q}_{\leqslant0}(X,L^{k}\otimes E)$ is just the space of the harmonic $L^{k}\otimes E$-valued $(n,q)$-forms on $X$, which is isomorphic to the Dolbeault cohomology group
\[
    H^{n,q}(X,L^{k}\otimes E).
\]
This isomorphism will be proved in next section (Proposition \ref{p21},\ref{p41}). It can be seen as a singular version of Hodge's theorem. In particular, $\mathcal{H}^{n,q}_{\leqslant0}(X,L^{k}\otimes E\otimes\mathfrak{I}(L^{k}))$ is just the image of the natural morphism
\[
i_{n,q}: H^{n,q}(X,L^{k}\otimes E\otimes\mathfrak{I}(L^{k}))\rightarrow H^{n,q}(X,L^{k}\otimes E\otimes\mathscr{I}(L^{k})).
\]
Based on this isomorphism, we eventually get an asymptotic estimate as follows.
\begin{theorem}\label{t2}
Let $L$ be a nef line bundle on $X$, and let $E$ be a vector bundle. Take $q\geqslant1$. We have the following conclusions:

1. Assume that $h_{0}$ is bounded, then
\[
h^{0,q}(L^{k}\otimes E)\leqslant Ck^{n-q}.
\]

2. Assume that $h_{0}$ has analytic singularities and $i\Theta_{L,h_{0}}$ has at least $n-s+1$ strictly positive eigenvalues at every point $x\in X$. Then
\[
   \dim\mathrm{Im}\mathrm{i}_{n,q}\leqslant Ck^{n-q}
\]
for $q\geqslant s$.

3. Assume that $X$ is K\"{a}hler, and $h_{0}$ has analytic singularities. Then
\[
    \dim\mathrm{Im}\mathrm{i}_{n,q}\leqslant Ck^{n-q}.
\]
\end{theorem}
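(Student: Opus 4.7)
The plan is to derive all three estimates from Theorem \ref{t1} specialized to the case $\lambda=0$, combined with the identifications between low-eigenvalue eigenform spaces and Dolbeault cohomology groups recorded in the introduction and in Propositions \ref{p21} and \ref{p41}.

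For Part 1, the hypothesis that $h_0$ is bounded means its local weight is locally bounded, so on a log resolution all the coefficients $\lceil a\lambda_j\rceil$ vanish and consequently $\mathfrak{I}(L^k)=\mathcal{O}_X$. Then $\mathcal{H}^{n,q}_{\leqslant 0}(L^k\otimes E\otimes\mathfrak{I}(L^k)) = \mathcal{H}^{n,q}_{\leqslant 0}(L^k\otimes E)$, which by the singular Hodge isomorphism is isomorphic to $H^{n,q}(X,L^k\otimes E)$. Theorem \ref{t1} with $\lambda=0$ yields $h^{n,q}(L^k\otimes E)\leqslant Ck^{n-q}$. Because $E$ is arbitrary, replacing $E$ by $E\otimes K_X^{-1}$ and invoking Dolbeault's theorem $H^{n,q}(X,L^k\otimes E\otimes K_X^{-1})\simeq H^{0,q}(X,L^k\otimes E)$ delivers the desired bound on $h^{0,q}$.

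For Parts 2 and 3, the introduction already records the key identification $\mathcal{H}^{n,q}_{\leqslant 0}(X,L^k\otimes E\otimes\mathfrak{I}(L^k))=\mathrm{Im}\,i_{n,q}$. Granted this, Theorem \ref{t1} at $\lambda=0$ immediately gives $\dim\mathrm{Im}\,i_{n,q}\leqslant Ck^{n-q}$. The two cases are separated only because the identification requires justification in different geometric settings: in Part 2, the strict positivity of $n-s+1$ eigenvalues of $i\Theta_{L,h_0}$ provides the identification in degree $q\geqslant s$ through an Andreotti--Grauert style $L^2$ argument on a log resolution, whereas in Part 3 the K\"ahler hypothesis supplies the commutation relations that allow Demailly's $L^2$ machinery to run without any pointwise curvature positivity, so the conclusion holds in all degrees.

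The main obstacle is not the asymptotic count itself, which follows mechanically from Theorem \ref{t1}, but the singular Hodge-theoretic identification between $\mathrm{Im}\,i_{n,q}$ and $\mathcal{H}^{n,q}_{\leqslant 0}(L^k\otimes E\otimes\mathfrak{I}(L^k))$. Establishing that every Dolbeault class in the image admits a harmonic representative of finite $e^{-\psi}$-weighted norm, and conversely that every such harmonic form corresponds to an actual cohomology class under $i_{n,q}$, is where the positivity hypothesis (Part 2) or the K\"ahler hypothesis (Part 3) enters. This is precisely the content of Propositions \ref{p21} and \ref{p41}, and once they are in place the three estimates are a uniform consequence of the $\lambda=0$ instance of Theorem \ref{t1}.
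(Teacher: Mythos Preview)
Your proposal is correct and matches the paper's own proof: apply Theorem~\ref{t1} at $\lambda=0$, invoke the Hodge-type identifications of Propositions~\ref{p21} and~\ref{p41} (together with $\mathfrak{I}(L^{k})=\mathcal{O}_{X}$ in the bounded case, which is Proposition~\ref{p22}(3)), and then substitute $E\otimes K_{X}^{-1}$ for $E$ to pass from $(n,q)$ to $(0,q)$. The only cosmetic difference is that the paper cites Proposition~\ref{p41} alone, whereas you (more carefully) separate out Proposition~\ref{p21} for Part~1; your description of how the positivity or K\"{a}hler hypothesis enters is also accurate in spirit, though the actual mechanism in Lemma~\ref{l41} is a twisted Bochner--Kodaira identity rather than an Andreotti--Grauert argument on a resolution.
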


The asymptotic estimate for the order of the cohomology group is a complicated problem in complex geometry. There are various work in this aspect. Here we only list a few of them. The first result is that
\[
    h^{0,q}(L^{k})\sim o(k^{n}),
\]
which is due to Siu \cite{Siu84a,Siu84b} when solving the Grauert--Riemenschneider conjecture \cite{GrR70}. Later Demailly also gives that
\[
    h^{0,q}(L^{k}\otimes E)\sim o(k^{n})
\]
for a nef line bundle $L$ and a vector bundle $E$ on a compact K\"{a}hler manifold based on his holomorphic Moser inequality \cite{Dem12}. Also we remark here that it can be optimized to
\[
h^{0,q}(L^{k}\otimes E)\sim O(k^{n-q})
\]
when $X$ is projective. Moreover, Matsumura \cite{Mat14,Mat15} generalizes it as
\[
    h^{0,q}(L^{k}\otimes E\otimes\mathscr{I}(h^{k}))\sim O(k^{n-q}),
\]
where $(L,h)$ is a pseudo-effective line bundle and $E$ is a vector bundle on a projective manifold. Here $\mathscr{I}(h)$ refers to the multiplier ideal sheaf. Recently, this result has been extended to a compact complex manifold by \cite{WaZ19} with additional requirement that $h$ has algebraic singularities. We remark here that \cite{WaZ19} also considers the asymptotic estimate for $h^{n,q}_{\leqslant\lambda}(L^{k}\otimes E)$ when $L$ is a semi-positive line bundle and $E$ is a vector bundle.

Our result also provides such type of estimate on a compact complex manifold, which is new.

Similar with \cite{Ber02}, we can use this estimate to solve the extension problem. As a result, we provide a generalization of the Grauert--Riemenschneider conjecture \cite{GrR70}.
\begin{theorem}[Generalization of the Grauert--Riemenschneider conjecture]\label{t3}
Let $X$ be a compact complex manifold, and let $L$ be a nef line bundle on $X$. Assume that one of the following situations occurs.

1. $h_{0}$ is bounded.

2. $h_{0}$ has analytic singularities and $i\Theta_{L,h_{0}}$ has at least $n-1$ strictly positive eigenvalues at every point $x\in X$. $i_{0,q}$ is either injective or surjective.

3. $X$ is K\"{a}hler, and $h_{0}$ has analytic singularities. $i_{0,q}$ is either injective or surjective. 

Then $L$ is big iff $(L)^{n}>0$.
\end{theorem}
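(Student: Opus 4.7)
The implication \emph{$L$ big $\Rightarrow (L)^{n} > 0$} is classical and requires none of the hypotheses on $h_{0}$ or $i_{0,q}$: bigness forces $h^{0}(X, L^{k}) \sim k^{n}$, which combined with asymptotic Riemann--Roch yields $(L)^{n}>0$.

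For the converse, assume $(L)^{n} > 0$ and aim at $h^{0}(X, L^{k}) \gtrsim k^{n}$. The plan is to combine Hirzebruch--Riemann--Roch on the compact complex manifold $X$,
\begin{equation*}
\chi(X, L^{k}) \;=\; \frac{(L)^{n}}{n!}\,k^{n} + O(k^{n-1}),
\end{equation*}
with the higher-cohomology upper bounds of Theorem \ref{t2}. Concretely it suffices to show $\sum_{q\geq 1} h^{q}(X, L^{k}) = o(k^{n})$, since then
\begin{equation*}
h^{0}(X, L^{k}) \;\geq\; \chi(X, L^{k}) - \sum_{q\geq 1}h^{q}(X, L^{k}) \;=\; \frac{(L)^{n}}{n!}\,k^{n} + o(k^{n}),
\end{equation*}
which is bigness.

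In case 1 ($h_{0}$ bounded), Theorem \ref{t2}(1) (using the $E$-substitution noted after Theorem \ref{t1} to identify $h^{q}(X, L^{k})$ with the appropriate $h^{0,q}$) gives $h^{q}(X, L^{k}) \leq Ck^{n-q} = o(k^{n})$ for every $q\geq 1$. In cases 2 and 3, Theorem \ref{t2} only controls $\dim\mathrm{Im}\,i_{0,q}$; the hypothesis that $i_{0,q}$ is either injective or surjective identifies this image with $H^{q}(X, L^{k}\otimes\mathfrak{I}(L^{k}))$ or $H^{q}(X, L^{k}\otimes\mathscr{I}(L^{k}))$, so we get an $O(k^{n-q})$ bound on one of these twisted cohomology groups. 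To transfer it to $h^{q}(X, L^{k})$ itself, I would apply the short exact sequence
\begin{equation*}
0 \longrightarrow L^{k}\otimes\mathscr{I}(L^{k}) \longrightarrow L^{k} \longrightarrow L^{k}\otimes(\mathcal{O}_{X}/\mathscr{I}(L^{k})) \longrightarrow 0
\end{equation*}
(and its $\mathfrak{I}$ analogue): the quotient is supported in a proper analytic subset $V\subset X$ of dimension $\leq n-1$ contained in the fixed singular locus of $h_{0}$, so $h^{q}(V, L^{k}|_{V}) = O(k^{n-1})$, and the induced long exact sequence upgrades the bound to $h^{q}(X, L^{k}) = o(k^{n})$.

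The main obstacle I anticipate lies in case 2 at $q=1$: Theorem \ref{t2}(2) supplies the $O(k^{n-q})$ estimate only for $q \geq s = 2$ under the hypothesis of $n-1$ strictly positive eigenvalues, so the estimate for $h^{1}$ must be extracted from a separate argument --- for instance via Demailly's holomorphic Morse inequalities on the open locus where $h_{0}$ is smooth and of almost-maximal rank, combined with nefness of $L$. A secondary subtlety is making the bound $h^{q}(V, L^{k}|_{V}) = O(k^{n-1})$ uniform in $k$, which should follow from the fact that $\mathrm{supp}(\mathcal{O}_{X}/\mathscr{I}(h_{0}^{k}))$ lies in the fixed singular locus of $h_{0}$ for every $k$ together with nefness of $L|_{V}$.
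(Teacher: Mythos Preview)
Your overall strategy---Riemann--Roch plus higher-cohomology bounds from Theorem~\ref{t2}---is exactly what the paper does, and your treatment of case~1 is correct. The gap you anticipated at $q=1$ in case~2 is real for the route you chose, but the paper sidesteps it by a simple observation you are missing: you do \emph{not} need to bound $h^{1}$ at all. Write
\[
h^{0}\bigl(L^{k}\otimes\mathfrak{I}(L^{k})\bigr)=\chi\bigl(X,L^{k}\otimes\mathfrak{I}(L^{k})\bigr)+h^{1}\bigl(L^{k}\otimes\mathfrak{I}(L^{k})\bigr)-h^{2}+h^{3}-\cdots.
\]
Injectivity of $i_{0,q}$ together with Theorem~\ref{t2}(2) gives $h^{q}(L^{k}\otimes\mathfrak{I}(L^{k}))=O(k^{n-q})$ only for $q\geqslant 2$, but that already forces the alternating tail to be $O(k^{n-2})$. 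Since $h^{1}\geqslant 0$, it contributes with the \emph{right} sign and can simply be dropped:
\[
h^{0}\bigl(L^{k}\otimes\mathfrak{I}(L^{k})\bigr)\;\geqslant\;\chi\bigl(X,L^{k}\otimes\mathfrak{I}(L^{k})\bigr)+O(k^{n-2}).
\]
No Morse inequalities are needed.

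The second difference is that the paper never transfers bounds back to the untwisted $h^{q}(X,L^{k})$ via your long exact sequence. Instead it stays on the twisted side: using $\chi(X,L^{k}\otimes\mathfrak{I}(L^{k}))=\chi(X,L^{k})-\chi(V,L^{k})$ with $V=V(\mathfrak{I}(L^{k}))$ of dimension $l<n$, one gets
\[
h^{0}\bigl(L^{k}\otimes\mathfrak{I}(L^{k})\bigr)\;\geqslant\;\frac{(L)^{n}}{n!}k^{n}+o(k^{n}),
\]
and then the inclusion $H^{0}(X,L^{k}\otimes\mathfrak{I}(L^{k}))\subset H^{0}(X,L^{k})$ gives bigness directly. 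This avoids your secondary subtlety about uniformity of the quotient support as well. The surjective case and case~3 run identically.
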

The original Grauert--Riemenschneider conjecture says that if there is a semi-positive line bundle $L$ on a compact complex manifold $X$ such that the curvature $i\Theta_{L}>0$ on an open subset, $X$ must be Moishezon. It is well-known that $X$ is a Moishezon manifold iff there exists a big line bundle on $X$. So this conjecture actually says that if $i\Theta_{L}$ is semi-positive and strictly positive on an open subset, $L$ will be big. This conjecture has been solved by \cite{Dem85,Siu84a}. 

We also remark here that when $X$ is a projective manifold, the conclusion of Theorem \ref{t3} is a well-known result in algebraic geometry. Moreover, it has been extended to a K\"{a}hler manifold in \cite{DmP04} through the holomorphic Moser inequality. The method here is totally different.

Theorem \ref{t3} has good applications. We only mention tow of them. Firstly, it partially solves Demailly--P\u{a}un's conjecture posed in \cite{DmP04}.
\begin{conjecture}[Demailly--P\u{a}un]
Let $X$ be a compact complex manifold. Assume that $X$ possesses a nef cohomology class $[\alpha]$ of type $(1,1)$ such that $\int_{X}\alpha^{n}>0$.  Then $X$ is in the Fujiki class $\mathcal{C}$.
\end{conjecture}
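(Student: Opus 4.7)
The plan is to deduce the conjecture from Theorem \ref{t3} by reducing Fujiki-classness to the construction of a big line bundle. Recall that by Moishezon's classical theorem, a compact complex manifold carrying a big line bundle is Moishezon, hence bimeromorphic to a projective manifold, and in particular belongs to the Fujiki class $\mathcal{C}$. So the entire problem is to promote the nef class $[\alpha]$ with $\int_{X}\alpha^{n}>0$ into an actual big line bundle on $X$ or on some bimeromorphic model.

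In the special (and simplest) case where $[\alpha]$ is rational, after scaling we may write $[\alpha]=c_{1}(L)$ for a nef line bundle $L$, and the hypothesis $\int_{X}\alpha^{n}>0$ becomes $(L)^{n}>0$. If the canonical limit metric $h_{0}$ on $L$ satisfies one of the three regularity assumptions of Theorem \ref{t3} (bounded, or analytic singularities together with the eigenvalue condition, or analytic singularities on a K\"ahler $X$ with $i_{0,q}$ either injective or surjective), then Theorem \ref{t3} directly yields that $L$ is big. Moishezon's theorem then places $X$ in the Fujiki class $\mathcal{C}$, establishing the conjecture in this range.

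For the general transcendental case one would try to approximate $[\alpha]$ by a sequence of rational nef classes $[\alpha_{k}]\to[\alpha]$ in $H^{1,1}(X,\mathbb{R})$, apply the rational case to each $L_{k}$ with $c_{1}(L_{k})=[\alpha_{k}]$, and then take a limit (presumably bimeromorphic) to conclude. Before even reaching this step, one must also verify that the limit metric $h_{0}$ associated to each $L_{k}$ admits the analytic-singularity or boundedness hypothesis required by Theorem \ref{t3}; this may be approached via a Demailly regularization, possibly combined with the equisingular approximation techniques.

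The main obstacle is precisely this transcendental step. On a general compact complex manifold the Néron--Severi group may be too sparse inside $H^{1,1}(X,\mathbb{R})$ for rational approximations of $[\alpha]$ to exist at all, and even when they do, the open conditions of nefness together with strict positivity of $(L_{k})^{n}$ may fail to propagate under perturbation. Moreover, the analytic-singularity hypothesis on the limit metric is not automatic and cannot in general be arranged through regularization without losing the curvature bound. These are exactly the points at which the present methods stall, which is why Theorem \ref{t3} only yields a partial resolution of the Demailly--P\u{a}un conjecture rather than a full proof.
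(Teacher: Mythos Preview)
The statement is a \emph{conjecture}, not a theorem, and the paper does not attempt to prove it. The paper's entire treatment consists of a single sentence following the statement: ``Obviously, Theorem \ref{t3} partially confirms this conjecture when $\alpha$ is integral.'' There is no further argument.

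Your proposal correctly identifies this partial resolution: in the integral case $[\alpha]=c_{1}(L)$ with $(L)^{n}>0$, Theorem \ref{t3} gives bigness of $L$ (under its additional hypotheses on $h_{0}$), hence $X$ is Moishezon and lies in $\mathcal{C}$. This matches the paper exactly. Your subsequent discussion of the transcendental case---rational approximation, propagation of nefness and positivity, and the analytic-singularity hypothesis---is a reasonable outline of the obstructions, but none of it appears in the paper, which simply leaves the general case open. So your proposal is not so much a proof attempt as a correct explanation of why the paper claims only a partial result; in that sense it is aligned with, and slightly more expansive than, what the paper actually does.

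One small caution: you write ``after scaling we may write $[\alpha]=c_{1}(L)$'' for rational $[\alpha]$, but the paper restricts to the \emph{integral} case. Also, even in that case Theorem \ref{t3} carries its own side hypotheses on $h_{0}$ (boundedness, analytic singularities with eigenvalue conditions, or K\"ahler with analytic singularities and injectivity/surjectivity of $i_{0,q}$), so the confirmation is partial even within the integral setting---a point you do note but which is worth emphasizing.
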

Obviously, Theorem \ref{t3} partially confirms this conjecture when $\alpha$ is integral.

%Another application is the following consequence.
%\begin{corollary}\label{c11}
%Let $X$ be a compact K\"{a}hler manifold with nef tangent bundle. If there exists a pseudo-effective line bundle $(L,h)$ on $X$ such that $h$ has analytic singularities and $(L)^{n}>0$, $X$ is projective.
%\end{corollary}

Next, we shall give a Nadel-type vanishing theorem.
\begin{theorem}\label{t4}
Let $X$ be a compact K\"{a}hler manifold, and let $L$ be a nef line bundle. Assume that $h_{0}$ has analytic singularities and provides analytic Zariski decomposition. Let $m$ be the dimension of the pole-set of $h_{0}$. Then we have
\[
   H^{q}(X,K_{X}\otimes L\otimes\mathscr{I}(L))=0
\]
for $q>\max\{n-\kappa(L),m\}$.
\end{theorem}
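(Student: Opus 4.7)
The plan is to reduce the Nadel-type vanishing on $X$ to a Kawamata--Viehweg-type vanishing on a log-resolution, using the analytic Zariski decomposition to control the numerical dimension of the positive part. Let $\mu: \tilde{X}\to X$ be a log-resolution adapted to $h_{0}$, chosen so that $\mu^{\ast}h_{0}$ has purely divisorial singularities along an SNC divisor $D=\sum a_{j}D_{j}$. The analytic Zariski decomposition hypothesis gives a splitting $\mu^{\ast}L = P + D$ in which $P$ carries a smooth semi-positive metric (so $P$ is nef on $\tilde{X}$), together with the crucial identity $H^{0}(\tilde{X},kP)=H^{0}(X,kL)$ for all $k\geq 1$. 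In particular, the numerical dimension $\nu(P)$ agrees with $\kappa(L)$, which is the ingredient that will convert Kodaira dimension into a cohomological vanishing range.

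Next I would translate $K_{X}\otimes L\otimes\mathscr{I}(L)$ to $\tilde{X}$ via the standard formula
\[
\mathscr{I}(h_{0}) = \mu_{\ast}\mathcal{O}_{\tilde{X}}\bigl(K_{\tilde{X}/X}-\lfloor D\rfloor\bigr),
\]
which gives the isomorphism $K_{X}\otimes L\otimes\mathscr{I}(L)\cong\mu_{\ast}\bigl(K_{\tilde{X}}\otimes\mu^{\ast}L\otimes\mathcal{O}_{\tilde{X}}(-\lfloor D\rfloor)\bigr)$. The pole-set of $h_{0}$ is where the ideal is non-trivial, so the higher direct images $R^{i}\mu_{\ast}$ of this sheaf are supported on a set of dimension at most $m$ and therefore vanish for $i>m$. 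The Leray spectral sequence then reduces the theorem to establishing
\[
H^{q}\bigl(\tilde{X},\,K_{\tilde{X}}+P+\{-D\}\bigr)=0\quad\text{for } q>n-\kappa(L),
\]
where $\{\,\cdot\,\}$ denotes the fractional part and the $\mathbb{Q}$-boundary $\{-D\}$ has SNC support.

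For this vanishing on $\tilde{X}$, I would invoke a Kawamata--Viehweg-type theorem on K\"ahler manifolds (for example the curvature-current version of Demailly or Takegoshi) applied to the nef line bundle $P$ twisted by the SNC fractional boundary. Since $\nu(P)=\kappa(L)$, the vanishing holds precisely in the range $q>n-\nu(P)=n-\kappa(L)$. Combining with the bound $R^{i}\mu_{\ast}=0$ for $i>m$ via the spectral sequence then yields the stated vanishing for $q>\max\{n-\kappa(L),m\}$.

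The step I expect to be the main obstacle is the application of the K\"ahler version of Kawamata--Viehweg on the modification $\tilde{X}$, which need not be projective even when $X$ is K\"ahler; one must ensure the chosen $\mu$ produces a K\"ahler $\tilde{X}$ (by Blanchard/Varouchas) and that the nef $\mathbb{Q}$-twist $P+\{-D\}$ fits into the hypotheses of the vanishing theorem one appeals to. The other delicate point is verifying, via the analytic Zariski decomposition, that the numerical dimension of $P$ computed on $\tilde{X}$ really equals $\kappa(L)$ on $X$, as this is where the hypothesis is genuinely used to promote a growth estimate into a sharp vanishing range.
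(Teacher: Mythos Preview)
Your route is entirely different from the paper's, and the obstacle you flag at the end is the heart of the matter rather than a technicality. The paper never passes to a resolution. It argues by growth-rate comparison: if $\mathcal{H}^{n,q}_{\leqslant 0}(X,L\otimes\mathfrak{I}(L))\neq 0$, the Koll\'ar-type injectivity (Proposition~\ref{p25}) together with the analytic Zariski decomposition gives
\[
h^{0}(X,L^{k-1})=h^{0}(X,L^{k-1}\otimes\mathfrak{I}(L^{k-1}))\leqslant \dim\mathcal{H}^{n,q}_{\leqslant 0}(X,L^{k}\otimes\mathfrak{I}(L^{k})),
\]
while Theorem~\ref{t2} bounds the right side by $Ck^{n-q}$. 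Comparing exponents forces $n-q\geqslant\kappa(L)$, so $\mathrm{Im}\,i_{n,q}=0$ for $q>n-\kappa(L)$; the passage from $\mathfrak{I}$ to $\mathscr{I}$ is then the long exact sequence of $0\to\mathfrak{I}(L)\to\mathscr{I}(L)\to\mathscr{I}(L)/\mathfrak{I}(L)\to 0$, using that the quotient is supported on $\{h_{0}=\infty\}$ of dimension $m$. The whole point of the paper's machinery is to trade the unavailable numerical-dimension comparison for this asymptotic cohomology bound.

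In your argument there are two concrete gaps besides the one you anticipate. First, after resolving analytic singularities the residual metric on $P=\mu^{\ast}L-D$ is only locally \emph{bounded}, not smooth; so $P$ is nef but need not be semi-positive, and the vanishing you need on the non-projective K\"ahler $\tilde{X}$ is exactly a Kawamata--Viehweg statement for a nef $\mathbb{R}$-class with klt boundary. The available result in that direction is Cao's theorem, phrased in terms of $\mathrm{nd}(L,\phi)$, and the paper explicitly remarks that no comparison between $\mathrm{nd}(L,\phi)$ and $\kappa(L)$ is known; your claim $\nu(P)=\kappa(L)$ (even the inequality $\nu(P)\geqslant\kappa(L)$ in the sense relevant to the vanishing you invoke) is thus precisely the missing ingredient, not a consequence of AZD. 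Second, the Leray step is misstated: that $R^{i}\mu_{\ast}$ (for $i>0$) is supported on a set of dimension $\leqslant m$ does not imply $R^{i}\mu_{\ast}=0$ for $i>m$. To descend vanishing from $\tilde{X}$ to $X$ you would need a relative local vanishing $R^{i}\mu_{\ast}=0$ for all $i>0$, which is again a K\"ahler Kawamata--Viehweg-type input you have not secured.
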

The proof of theorem \ref{t4} is similar with the main result in \cite{Mat14}, but the conclusion here is independent. Also we use two examples (Sect. 2.4) to show that the requirement in Theorem \ref{t4} is not too demanding.

Notice that in \cite{Cao14}, it is shown another Nadel-type vanishing theorem saying that if $(L,\phi)$ is pseudo-effective, then
\[
   H^{q}(X,K_{X}\otimes L\otimes\mathscr{I}(\phi))=0
\]
for $q>n-\textrm{nd}(L,\phi)$. Here $\textrm{nd}(L,\phi)$ is the numerical dimension of $L$ associated with $\phi$ defined in \cite{Cao14}. Notice that we have
\[
\kappa(L)\leqslant\textrm{nd}(L),
\]
where $\textrm{nd}(L)$ is the numerical dimension of $L$ defined by intersection theory (of course without any specified metric). These two types of numerical dimension do not coincide well. Indeed the example in \cite{DPS94} shows that there do exists the case that $\textrm{nd}(L)>\textrm{nd}(L,\phi_{\min})$ with $\phi_{\min}$ the minimal singular metric on $L$. So it seems to me that there is no obvious relation between $\textrm{nd}(L,\phi)$ and $\kappa(L)$. Therefore it is not clear currently that whether the work in \cite{Cao14} implies Theorem \ref{t4}. Also we remark here that there is no obvious relationship between the work (Theorem 1.8) of \cite{WaZ19} and Theorem \ref{t4}.

In final, we make a discussion on the eigenform space
\[
\mathcal{H}^{n,q}_{\leqslant\lambda}(X,L)
\]
with $\lambda=0$ for a pseudo-effective line bundle $(L,h)$. More specifically, we will define $\mathcal{H}^{n,q}_{\leqslant0}(X,L)$ (Definition \ref{d51}, Sect.5) and prove a singular version of Hodge's theorem (Proposition \ref{p51}, Sect.5) when $L$ is merely pseudo-effective.

The plan of this paper is as follows. In Sect.2 we give a brief introduction on all the required materials including the nef line bundle, the modified ideal sheaf, Bergman kernel for the space $\mathcal{H}^{n,q}_{\leqslant\lambda}$, Siu's $\partial\bar{\partial}$-Bochner formula and so on. In Sect.3 we prove a submeanvalue inequality for forms in $\mathcal{H}^{n,q}_{\leqslant\lambda}$ and complete the proof of Theorem \ref{t1}. In Sect.4 we relate Theorem \ref{t1} to the asymptotic estimate for the cohomology group and give some applications. In the final section, we consider the $\lambda=0$ case for a pseudo-effective line bundle.

\begin{acknowledgment}
The author want to thank Prof. Bo Berndtsson, who introduced and carefully explained this problem to him. Also the author thanks Prof. Jixiang Fu for his suggestion and encouragement.
\end{acknowledgment}

\section{Preliminary}
\subsection{Nef line bundle}
Firstly, we briefly recall the multiplier ideal sheaf. Let $L\rightarrow X$ be a line bundle on a compact complex manifold $X$. Let $S:=\{h_{i}\}$ be a family of smooth metrics on $L$ with weight functions $\{\phi_{i}\}$, such that $\int_{X}e^{-\phi_{i}}\rightarrow\infty$ as $i$ tends to $\infty$, and $\int_{V}e^{-\phi_{i}}\leqslant C$ for some open subset $V$ of $X$. Then the Nadel-type multiplier ideal sheaf \cite{Nad90} (or dynamic multiplier ideal sheaf) at $x\in X$ can be defined as
\[
   \mathcal{I}(S)_{x}:=\{f\in\mathcal{O}_{X,x};\int_{U}|f|^{2}_{h_{i}}\leqslant C\textrm{ as }i\rightarrow\infty\},
\]
where $U$ is a local coordinate neighborhood of $x$.

On the other hand, if $h$ is a singular metric on $L$ with the weight function $\phi$, then its static multiplier ideal sheaf $\mathscr{I}(h)$ is defined in \cite{Dem12} by
\[
   \mathscr{I}(h)_{x}:=\{f\in\mathcal{O}_{X,x};|f|^{2}_{h}\textrm{ is integrable around }x\}.
\]

These two types of multiplier ideal sheaves coincide well when $L$ is a nef line bundle. Indeed, if $L$ is nef, by definition there exists a family of smooth metrics $S=\{h_{\varepsilon}\}$ such that $i\Theta_{L,h_{\varepsilon}}\geqslant-\varepsilon\omega$ for any $\varepsilon>0$. Here $\omega$ is a Hermitian metric on $X$ fixed before. Let $\phi_{\varepsilon}$ be the weight function of $h_{\varepsilon}$, then it is quasi-plurisubharmonic. Therefore $\{\phi_{\varepsilon}\}$ is locally bounded in $L^{1}$-norm, hence relatively compact. So we can find a subsequence $\{\phi_{\varepsilon_{i}}\}$ converging to a limit $\phi_{0}$ in $L^{1}$-norm. In particular, $i\Theta_{L,\phi_{0}}\geqslant0$. Let $h_{0}$ be the corresponding metric. In the rest part of this paper, $h_{0}$ will always refers to this metric if not specified. Now if $f\in\mathcal{I}(S)_{x}$, we have
\[
    \int_{U}|f|^{2}e^{-\phi_{0}}=\int_{U}\lim_{i\rightarrow\infty}|f|^{2}_{h_{\varepsilon_{i}}}=\lim_{i\rightarrow\infty}\int_{U}|f|^{2}_ {h_{\varepsilon_{i}}}<\infty
\]
by dominate convergence theorem. It means that $f\in\mathscr{I}(\phi_{0})_{x}$. On the other hand, if $g\in\mathscr{I}(\phi_{0})_{x}$, it is easy to see that $g\in\mathcal{I}(S)_{x}$ as well. In summary, we have
\[
   \mathcal{I}(S)=\mathscr{I}(\phi_{0})
\]
when $L$ is nef, and briefly denote it by $\mathcal{I}(L)$. It is also the start point of our work. For more information about the multiplier ideal sheaf (the dynamic one and the static one), one could refer to \cite{Dem12,Nad90}.

Next we shall present a canonical way to define the Laplacian operator associated to a nef line bundle $L$. First, by definition of the nef line bundle we have a family of smooth metrics $S=\{h_{\varepsilon}\}$ on $L$ with weight functions $\phi_{\varepsilon}$. We take its convergent subsequence and still denote it by $\{h_{\varepsilon}\}$. In particular, we have $h_{\varepsilon_{1}}\leqslant h_{\varepsilon_{2}}$ for any $0\leqslant\varepsilon_{2}\leqslant\varepsilon_{1}$, and the $L^{1}$-limit is denoted by $h_{0}$ with weight function $\phi_{0}$.

Fix a Hermitian metric $\omega$ on $X$. Since $h_{\varepsilon}$ is a smooth metric on $L$, we can define the Laplacian operator $\Delta_{\varepsilon}$ corresponds to $\omega$ and $h_{\varepsilon}$ in the usual sense. Now for any test $L$-valued $(p,q)$-form $\alpha$, we define the Laplacian operator associated to $h_{0}$ by
\[
    \Delta_{0}\alpha:=\lim_{\varepsilon\rightarrow0}\Delta_{\varepsilon}\alpha
\]
in the sense of $L^{2}$-topology. It is easy to verify that the limit exits if and only if $h_{\varepsilon}$ converges to $h_{0}$ in $L^{1}$-norm, while the later has been guaranteed. $\Delta_{0}$ possesses some basic properties of the classic Laplacian operator, such as $\Delta_{0}\bar{\partial}=\bar{\partial}\Delta_{0}$ and self-adjointness, i.e.
\[
<\Delta_{0}\alpha,\beta>_{h_{0}}=<\alpha,\Delta_{0}\beta>_{h_{0}}
\]
for any $\alpha,\beta$. It is just some basic calculation, so we omit the details here.

There is one issue to be concerned. $\Delta_{0}\alpha$ may not be a smooth form even if $\alpha$ is. So we need to carefully define the eigenvalue and eigenform. First, given two $L$-valued smooth $(p,q)$-forms $\alpha,\beta$ on $X$, we say that they are Dolbeault cohomological equivalent (it may not be a standard convention), if there exists an $L$-valued smooth $(p,q-1)$-form $\gamma$ such that $\alpha=\beta+\bar{\partial}\gamma$. It is easy to verify that it's an equivalence relationship. We briefly denote it by $\beta\in[\alpha]$ and vice versa. In particular, if $\alpha$ or $\beta$ is $\bar{\partial}$-closed, the Dolbeault cohomological equivalence just means that they belong to the same Dolbeault cohomology class. Now we have the following definition.
\begin{definition}\label{d21}
Let $\alpha$ be an $L$-valued $(p,q)$-form on $X$. Assume that for every $\varepsilon\ll1$, there exists a Dolbeault cohomological equivalent representative $\alpha_{\varepsilon}\in[\alpha]$ such that
\begin{enumerate}
  \item $\Delta_{\varepsilon}\alpha_{\varepsilon}=\mu\alpha_{\varepsilon}$. Here we ask that $\mu$ is independent of $\varepsilon$;
  \item $\alpha_{\varepsilon}\rightarrow\alpha$ in $L^{2}$-norm.
\end{enumerate}
Then we call $\alpha$ an eigenform of the Laplacian operator $\Delta_{0}$ with eigenvalue $\mu$. We simply denote it by $\Delta_{0}\alpha=\mu\alpha$.
\end{definition}
The eigenform space of $\Delta_{0}$ is defined as
\[
    \mathcal{H}^{p,q}_{\leqslant\lambda}(X,L,\Delta_{0}):=\{\alpha\in A^{p,q}(X,L);\Delta_{0}\alpha=\mu\alpha\textrm{ and }\mu\leqslant\lambda\}.
\]

We are especially interested in the $\lambda=0$ case since it corresponds to the Dolbeaut cohomology group. In fact, given a Dolbeault cohomology class $[\alpha]\in H^{p,q}(X,L)$, we have a unique $\Delta_{\varepsilon}$-harmonic representative $\alpha_{\varepsilon}\in[\alpha]$ for every $\varepsilon$ by Hodge's theorem. Moveover, since the harmonic representative minimizes the norm, we have $\|\alpha_{\varepsilon}\|_{h_{\varepsilon}}\leqslant\|\alpha\|_{h_{\varepsilon}}$. Assume that $\|\alpha\|_{h_{\varepsilon}}\leqslant C$ for all $\varepsilon$ (which means that $[\alpha]\in H^{p,q}(X,L\otimes\mathcal{I}(L))$). Then we can find a convergent subsequence of $\{\alpha_{\varepsilon}\}$ with limit $\tilde{\alpha}$, and $\tilde{\alpha}\in[\alpha]$. Therefore $\tilde{\alpha}$ is an eigenform of the Laplacian operator $\Delta_{0}$ with eigenvalue $0$ by definition. In other word, we could say that $\tilde{\alpha}$ is $\Delta_{0}$-harmonic. We remark here that in general $\tilde{\alpha}$ is merely an $L^{2}$-bounded $(p,q)$-form. But if $p=n$, $\tilde{\alpha}$ must be smooth. Indeed, it is not hard to see that
\[
\tilde{\alpha}=c_{n-q}(\lim_{\varepsilon\rightarrow0}\ast\alpha_{\varepsilon})\wedge\omega_{q}.
\]
Using the Kodaira--Akizuki--Nakano formula, $\lim_{\varepsilon\rightarrow0}\ast\alpha_{\varepsilon}$ is a $\bar{\partial}$-closed $(n-q,0)$-form hence holomorphic. It is enough for our purpose.

On the other hand, a $\Delta_{0}$-harmonic form $\alpha$ must be $\bar{\partial}$-closed by definition, so it naturally defines a cohomology class
\[
[\alpha]\in H^{p,q}(X,L).
\]
Obviously, if $\alpha,\beta$ are two different $\Delta_{0}$-harmonic forms, $[\alpha]\neq[\beta]$ in $H^{p,q}(X,L)$. In summary we have eventually proved a singular version of Hodge's theorem.
\begin{proposition}[A singular version of Hodge's theorem, I]\label{p21}
Let $X$ be a compact complex manifold, and let $L$ be a nef line bundle on $X$. Let $\Delta_{0}$ be the Laplacian operator defined before. Then we have
\begin{equation}\label{e3}
\begin{split}
   \mathcal{H}^{n,q}_{\leqslant0}(X,L,\Delta_{0})&\subset H^{n,q}(X,L),\\
   \mathcal{H}^{n,q}_{\leqslant0}(X,L\otimes\mathcal{I}(L),\Delta_{0})&\simeq H^{n,q}(X,L\otimes\mathcal{I}(L)).
\end{split}
\end{equation}
\begin{proof}
Based on the discussions above, it only remains to show that $\lim_{\varepsilon\rightarrow0}\ast\alpha_{\varepsilon}$ is $\bar{\partial}$-closed. Let's recall the Kodaira--Akizuki--Nakano formula, which says
\[
\|\bar{\partial}\alpha\|^{2}_{h}+\|\bar{\partial}^{\ast}\alpha\|^{2}_{h}=\|(\partial^{h})^{\ast}\alpha\|^{2}_{h}+\|\partial^{h}\alpha\|^{2}_{h}+ (i[\Theta_{L,h},\Lambda]\alpha,\alpha)_{h}
\]
for any $\alpha\in A^{n,q}(X,L)$ and smooth metric $h$ on $L$. Let $h=h_{\varepsilon}$ and $\alpha=\alpha_{\varepsilon}$, we have
\[
\|\bar{\partial}\alpha_{\varepsilon}\|^{2}_{h_{\varepsilon}}+\|\bar{\partial}^{\ast}\alpha_{\varepsilon}\|^{2}_{h_{\varepsilon}}
=\|(\partial^{h_{\varepsilon}})^{\ast}\alpha_{\varepsilon}\|^{2}_{h_{\varepsilon}}+\|\partial^{h_{\varepsilon}}\alpha_{\varepsilon}\|^{2}_{h _{\varepsilon}}+(i[\Theta_{L,h_{\varepsilon}},\Lambda]\alpha_{\varepsilon},\alpha_{\varepsilon})_{h_{\varepsilon}}.
\]
Since $\alpha_{\varepsilon}$ is $\Delta_{\varepsilon}$-harmonic, the left hand is zero. Take the limit with respect to $\varepsilon$, we get that
\[
0=\lim_{\varepsilon\rightarrow0}(\|(\partial^{h_{\varepsilon}})^{\ast}\alpha_{\varepsilon}\|^{2}_{h_{\varepsilon}}+\|\partial^{h_{\varepsilon}} \alpha_{\varepsilon}\|^{2}_{h _{\varepsilon}})+(i[\Theta_{L,h_{0}},\Lambda]\alpha_{0},\alpha_{0})_{h_{0}}).
\]
Since $\|(\partial^{h_{\varepsilon}})^{\ast}\alpha_{\varepsilon}\|^{2}_{h_{\varepsilon}}$, $\|\partial^{h_{\varepsilon}} \alpha_{\varepsilon}\|^{2}_{h _{\varepsilon}}$ and $i\Theta_{L,h_{0}}$ are non-negative, we eventually get that
\[
\lim_{\varepsilon\rightarrow0}\|(\partial^{h_{\varepsilon}})^{\ast}\alpha_{\varepsilon}\|^{2}_{h_{\varepsilon}}=\lim_{\varepsilon\rightarrow0} \|\partial^{h_{\varepsilon}} \alpha_{\varepsilon}\|^{2}_{h _{\varepsilon}}=(i[\Theta_{L,h_{0}},\Lambda]\alpha_{0},\alpha_{0})_{h_{0}})=0.
\]
In particular,
\[
\lim_{\varepsilon\rightarrow0}(\partial^{h_{\varepsilon}})^{\ast}\alpha_{\varepsilon}=\ast\bar{\partial}(\lim_{\varepsilon\rightarrow0}\ast \alpha_{\varepsilon})=0.
\]
It exactly implies that $\lim_{\varepsilon\rightarrow0}\ast\alpha_{\varepsilon}$ is $\bar{\partial}$-closed.
\end{proof}
\end{proposition}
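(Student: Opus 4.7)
The plan is to extract most of the content from the discussion immediately preceding the statement, leaving essentially two tasks: (a) verify that any $\tilde{\alpha}\in\mathcal{H}^{n,q}_{\leqslant 0}(X,L,\Delta_{0})$ is $\bar{\partial}$-closed and that the induced map to $H^{n,q}(X,L)$ is injective, which gives the first inclusion; (b) verify that the weak limit $\tilde{\alpha}$ associated to a class in $H^{n,q}(X,L\otimes\mathcal{I}(L))$ is smooth and is a genuine eigenform of $\Delta_{0}$ with eigenvalue $0$ in the sense of Definition \ref{d21}, which gives the isomorphism.

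Part (a) is fairly direct: $\bar{\partial}$-closedness passes through the $L^{2}$-limit distributionally from $\bar{\partial}\alpha_{\varepsilon}=0$, and injectivity of $\tilde{\alpha}\mapsto[\tilde{\alpha}]$ follows because each $\alpha_{\varepsilon}$ is the unique $h_{\varepsilon}$-minimiser in its class, so two distinct harmonic limits cannot be cohomologous. For part (b) the hypothesis $[\alpha]\in H^{n,q}(X,L\otimes\mathcal{I}(L))$ combined with the minimising property gives the uniform bound $\|\alpha_{\varepsilon}\|_{h_{\varepsilon}}\leqslant\|\alpha\|_{h_{\varepsilon}}\leqslant C$, so a subsequence converges in $L^{2}$ to $\tilde{\alpha}$. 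The pointwise identity
\[
\tilde{\alpha}=c_{n-q}\bigl(\lim_{\varepsilon\to 0}\ast\alpha_{\varepsilon}\bigr)\wedge\omega_{q}
\]
then reduces the smoothness of $\tilde{\alpha}$ to showing that the $(n-q,0)$-form $\sigma:=\lim_{\varepsilon\to 0}\ast\alpha_{\varepsilon}$ is holomorphic.

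The main obstacle is establishing $\bar{\partial}\sigma=0$, since the limit procedure must interact with a differential operator whose symbol also depends on $\varepsilon$ through $h_{\varepsilon}$. My plan is to exploit the Kodaira--Akizuki--Nakano identity
\[
\|\bar{\partial}\alpha_{\varepsilon}\|^{2}_{h_{\varepsilon}}+\|\bar{\partial}^{\ast}\alpha_{\varepsilon}\|^{2}_{h_{\varepsilon}}=\|(\partial^{h_{\varepsilon}})^{\ast}\alpha_{\varepsilon}\|^{2}_{h_{\varepsilon}}+\|\partial^{h_{\varepsilon}}\alpha_{\varepsilon}\|^{2}_{h_{\varepsilon}}+\bigl(i[\Theta_{L,h_{\varepsilon}},\Lambda]\alpha_{\varepsilon},\alpha_{\varepsilon}\bigr)_{h_{\varepsilon}}
\]
for the $\Delta_{\varepsilon}$-harmonic representative $\alpha_{\varepsilon}$. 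The left-hand side vanishes; on the right, the curvature term is bounded below by $-\varepsilon q\|\alpha_{\varepsilon}\|^{2}_{h_{\varepsilon}}$ thanks to $i\Theta_{L,h_{\varepsilon}}\geqslant-\varepsilon\omega$, and on passing to the limit the three right-hand terms become individually non-negative (the curvature contribution being controlled by $i\Theta_{L,h_{0}}\geqslant 0$ together with the uniform $L^{2}$ bound). Each limit must therefore vanish; in particular $\|(\partial^{h_{\varepsilon}})^{\ast}\alpha_{\varepsilon}\|_{h_{\varepsilon}}\to 0$. Since on $(n,q)$-forms $(\partial^{h_{\varepsilon}})^{\ast}$ coincides up to sign with $\ast\bar{\partial}\ast$ coupled to $h_{\varepsilon}$, and the $h_{\varepsilon}$-dependence of the $\ast$ drops out on the bidegree of $\sigma$, this yields $\bar{\partial}\sigma=0$, which finishes the argument.
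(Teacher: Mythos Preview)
Your proposal is correct and follows essentially the same route as the paper: reduce everything to showing that $\sigma=\lim_{\varepsilon\to 0}\ast\alpha_{\varepsilon}$ is $\bar{\partial}$-closed, then apply the Kodaira--Akizuki--Nakano identity to the $\Delta_{\varepsilon}$-harmonic $\alpha_{\varepsilon}$, use harmonicity to kill the left side, and use non-negativity of the right-hand terms in the limit (via $i\Theta_{L,h_{0}}\geqslant 0$) to force $\|(\partial^{h_{\varepsilon}})^{\ast}\alpha_{\varepsilon}\|_{h_{\varepsilon}}\to 0$. Your write-up is in fact slightly more careful than the paper's, making explicit the $-\varepsilon q\|\alpha_{\varepsilon}\|^{2}_{h_{\varepsilon}}$ lower bound and the reason the $h_{\varepsilon}$-dependence of $\ast$ is harmless on the bidegree of $\sigma$.
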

The proof of Proposition \ref{p21} also shows that the $\Delta_{0}$-harmonic representative minimizes the $L^{2}$-norm defined by $h_{0}$.

As is shown before, an element of $\mathcal{H}^{n,q}_{\leqslant0}(X,L,\Delta_{0})$ must be $\bar{\partial}$-closed. For a general $\lambda$, we will see (in the proof of the main result) that the $\alpha\in\mathcal{H}^{n,q}_{\leqslant\lambda}(X,L,\Delta_{0})$ with $\bar{\partial}\alpha=0$ also plays an important role in the estimate of the number $h^{n,q}_{\leqslant\lambda}$.

When $X$ is K\"{a}hler, one could even parallel extend the other properties in Hodge theory to this situation. However, it is not the theme of this paper, so we will leave it for the future.

\subsection{The modified ideal sheaf}
We will introduce a notion called the modified ideal sheaf in this subsection. Remember that for a singular metric $\varphi$ on $L$ with analytic singularities, its (static) multiplier ideal sheaf can be computed precisely. Indeed, suppose that
\[
\varphi\sim a\log(|f_{1}|^{2}+\cdots+|f_{N}|^{2})
\]
near the poles. Here $f_{i}$ is a holomorphic function. We define $\mathscr{S}$ to be the sheaf of holomorphic functions $h$ such that $|h|^{2}e^{-\frac{\varphi}{a}}\leqslant C$. Then one computes a smooth modification $\mu:\tilde{X}\rightarrow X$ of $X$ such that $\mu^{\ast}\mathscr{S}$ is an invertible sheaf $\mathcal{O}_{\tilde{X}}(-D)$ associated with a normal crossing divisor $D=\sum\lambda_{j}D_{j}$, where $D_{j}$ is the component of the exceptional divisor of $\tilde{X}$. Now, we have $K_{\tilde{X}}=\mu^{\ast}K_{X}+R$, where $R=\sum\rho_{j}D_{j}$ is the zero divisor of the Jacobian function of the blow-up map. After some simple computation shown in \cite{Dem12}, we will finally get that
\[
\mathscr{I}(\varphi)=\mu_{\ast}\mathcal{O}_{\tilde{X}}(\sum (\rho_{j}-\lfloor a\lambda_{j}\rfloor)D_{j}),
\]
where $\lfloor a\lambda_{j}\rfloor$ denotes the round down of the real number $a\lambda_{j}$.

Now we have the following definition.
\begin{definition}\label{d22}
Let $h$ be a singular metric on $L$ with weight function $\varphi$. Assume that $\varphi$ has analytic singularities. Fix the notations as before, the modified ideal sheaf is defined as
\[
\mathfrak{I}(\varphi):=\mu_{\ast}\mathcal{O}_{\tilde{X}}(\sum(-\lceil a\lambda_{j}\rceil)D_{j}).
\]
Here $\lceil a\lambda_{j}\rceil$ denotes the round up of the real number $a\lambda_{j}$.
\end{definition}

Let
\[
\tau_{j}=
\begin{cases}
\lambda_{j}+\frac{\rho_{j}}{a} & \textrm{if } a\lambda_{j} \textrm{ is an integer} \\
\lambda_{j}+\frac{\rho_{j}+1}{a} & \textrm{if } a\lambda_{j} \textrm{ is not an integer}.
\end{cases}
\]
Then
\[
\mu_{\ast}\mathcal{O}_{\tilde{X}}(\sum(-\lceil a\lambda_{j}\rceil)D_{j})=\mu_{\ast}\mathcal{O}_{\tilde{X}}(\sum(\rho_{j}-\lfloor a\tau_{j}\rfloor)D_{j}).
\]
Let $g_{j}$ be the generator of $D_{j}$ on a local coordinate ball $V_{k}$. We define a function $\psi_{k}=\mu_{\ast}(a\sum\tau_{j}\log(\sum|g_{j}|^{2}))$ on $\mu(V_{k})$. It is easy to verify that
\[
\mathfrak{I}(\varphi)=\mathscr{I}(\psi_{k}).
\]
Let $g^{j}_{ik}$ be the transition function of $\mathcal{O}_{\tilde{X}}(D_{j})$ between $V_{k}$ and $V_{i}$, then
\[
\psi_{k}=\psi_{i}+\mu_{\ast}(a\sum\tau_{j}\log(\sum|g^{j}_{ik}|^{2})).
\]
Since $g^{j}_{ik}$ is a nowhere vanishing holomorphic function,
\[
\mu_{\ast}(a\sum\tau_{j}\log(\sum|g^{j}_{ik}|^{2}))
\]
is bounded. So being $L^{2}$-bounded against $\psi_{k}$ is equivalent to be $L^{2}$-bounded against $\psi_{i}$ for any $i,k$. Then after gluing all the $\psi_{k}$ together to be $\psi$ via a partition of unity, we have
\[
\mathfrak{I}(\varphi)=\mathscr{I}(\psi).
\]
As a result, $\mathfrak{I}(\varphi)$ is an ideal sheaf.

We list a few basic properties of the modified ideal sheaf here.
\begin{proposition}\label{p22}
Let $\varphi$ be a singular metric with analytic singularities. Then
\begin{enumerate}
  \item $\mathfrak{I}(\varphi)\subset\mathscr{I}(\varphi)$, and $\mathfrak{I}(\varphi)=\mathscr{I}(\varphi)$ iff $\varphi$ is defined by a normal crossing divisor.
  \item If $f\in\mathfrak{I}(\varphi)$, $|f|^{2}e^{-\varphi}$ is bounded. More precisely,
  \[
  |f|^{2}e^{-\varphi}\sim\pi_{\ast}(\Pi_{j}|g_{j}|^{2(\lceil a\lambda_{j}\rceil-a\lambda_{j})})
  \]
  near the poles of $\varphi$. Remember that $g_{j}$ is the generator of $D_{j}$. Hence $|f|^{2}e^{-\varphi}$ actually vanishes near the poles of $\varphi$ unless $\varphi$ has algebraic singularities.
  \item $\mathfrak{I}(\varphi)=\mathcal{O}_{X}$ iff $\varphi$ is bounded.
\end{enumerate}
\begin{proof}
(1) The first assertion is obvious. Observe that $\mathfrak{I}(\varphi)=\mathscr{I}(\varphi)$ iff $\tau_{j}=\lambda_{j}$ for all $j$, iff $\rho_{j}=0$ and $a\lambda_{j}$ is an integer for all $j$. If so, $a$ must be a rational number. On the other hand, $\rho_{j}$ comes from the Jacobian divisor, $\rho_{j}=0$ means that the log-resolution $\mu$ is trivial, hence $\varphi$ is defined by a normal crossing divisor $E=\sum\sigma_{j}E_{j}$ on $X$. Namely,
\[
\varphi=a\sum\sigma_{j}|f_{j}|^{2}
\]
if we denote the generator of $E_{j}$ by $f_{j}$. Notice that $\varphi$ is a metric of $L$, we actually have $\mathcal{O}_{X}(E)=L$ and $a=1$.

(2) By definition,
\[
\mu^{\ast}(|f|^{2}e^{-\varphi})\sim\Pi_{j}|g_{j}|^{2(\lceil a\lambda_{j}\rceil-a\lambda_{j})}
\]
near the poles, hence the desired estimate. Moreover, $|f|^{2}e^{-\varphi}\neq0$ at the poles iff $\lceil a\lambda_{j}\rceil=a\lambda_{j}$. In this situation, $a$ is a rational number and that's the last assertion.

(3) is a direct consequence of (2).
\end{proof}
\end{proposition}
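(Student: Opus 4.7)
The plan is to address each of the three assertions directly from the explicit resolutions
\[
\mathfrak{I}(\varphi)=\mu_{*}\mathcal{O}_{\tilde{X}}\Bigl(\sum_{j}(-\lceil a\lambda_{j}\rceil)D_{j}\Bigr),\qquad \mathscr{I}(\varphi)=\mu_{*}\mathcal{O}_{\tilde{X}}\Bigl(\sum_{j}(\rho_{j}-\lfloor a\lambda_{j}\rfloor)D_{j}\Bigr),
\]
together with the local asymptotic $\mu^{*}\varphi\sim a\sum_{j}\lambda_{j}\log|g_{j}|^{2}$ on $\tilde{X}$ in terms of the local generators $g_{j}$ of the components $D_{j}$.

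For part (1), I would reduce the inclusion to a coefficient inequality on $\tilde{X}$, namely $-\lceil a\lambda_{j}\rceil\leq\rho_{j}-\lfloor a\lambda_{j}\rfloor$, equivalently $\lceil a\lambda_{j}\rceil-\lfloor a\lambda_{j}\rfloor\geq -\rho_{j}$. This holds automatically because the left-hand side lies in $\{0,1\}$ while the Jacobian multiplicity $\rho_{j}$ is non-negative. Equality of the two sheaves then forces both $\rho_{j}=0$ and $a\lambda_{j}\in\mathbb{Z}$ for every index $j$. I would interpret the vanishing of all $\rho_{j}$ as saying that the resolution contributes no genuine exceptional divisor to the support, so the singular locus of $\varphi$ is already carried by a divisor on $X$; combining this with the integrality of the $a\lambda_{j}$ and the fact that $\varphi$ is a weight of the line bundle $L$ forces this divisor to be normal-crossing and to represent $L$.

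For part (2), I would work entirely on $\tilde{X}$. If $f\in\mathfrak{I}(\varphi)$, then by definition $\mu^{*}f$ vanishes to order at least $\lceil a\lambda_{j}\rceil$ along each $D_{j}$, whereas $\mu^{*}e^{-\varphi}$ behaves like $\prod_{j}|g_{j}|^{-2a\lambda_{j}}$, so
\[
\mu^{*}(|f|^{2}e^{-\varphi})\sim\prod_{j}|g_{j}|^{2(\lceil a\lambda_{j}\rceil-a\lambda_{j})}
\]
near the exceptional locus. All exponents are non-negative, which yields the claimed bound, and the vanishing along $D_{j}$ is strict unless $a\lambda_{j}\in\mathbb{Z}$, i.e.\ unless $\varphi$ has algebraic singularities.

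For part (3), one direction is essentially tautological: if $\varphi$ is bounded then one may take all $\lambda_{j}=0$, whence the defining divisor of $\mathfrak{I}(\varphi)$ is zero and $\mathfrak{I}(\varphi)=\mathcal{O}_{X}$. Conversely, if $\mathfrak{I}(\varphi)=\mathcal{O}_{X}$ then the constant section $1$ belongs to the sheaf, and applying part (2) with $f=1$ shows that $e^{-\varphi}$ is locally bounded, so $\varphi$ is bounded below; the upper bound is automatic from the quasi-plurisubharmonicity of $\varphi$ on compact $X$. The hard part of the argument is the equality case in (1): one must carefully justify that $\rho_{j}=0$ along every component that appears in the support implies the log-resolution is trivial on the singular locus, distinguishing contributions from strict transforms and truly exceptional divisors, and then translate the integrality condition into a clean geometric statement about $\varphi$ being defined by a normal-crossing divisor on $X$ itself.
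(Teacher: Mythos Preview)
Your proposal is correct and follows essentially the same route as the paper: both arguments compare the divisor coefficients $-\lceil a\lambda_{j}\rceil$ and $\rho_{j}-\lfloor a\lambda_{j}\rfloor$ for part (1), compute $\mu^{*}(|f|^{2}e^{-\varphi})$ directly for part (2), and deduce part (3) from part (2) applied to $f=1$. The only cosmetic difference is that the paper phrases the equality case in (1) via the auxiliary quantities $\tau_{j}$ while you work with the coefficients directly, but the content is identical.
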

Let's pause for a second. We can exclude the situation that $\varphi$ has algebraic singularities here since it has been discussed in \cite{WaZ19}. So from now on, $|f|^{2}e^{-\varphi}$ will always vanish near the poles if $f\in\mathfrak{I}(\varphi)$.  

Note that although
\[
\mathfrak{I}(\varphi)=\mathscr{I}(\psi)
\]
for some function $\psi$, the modified ideal sheaf behaves differently from the static multiplier ideal sheaf in many aspects. The following superadditivity is an interesting evidence.
\begin{proposition}[Superadditivity]\label{p23}
Let $\varphi_{1},\varphi_{2}$ be two singular metrics with analytic singularities. Then
\[
\mathfrak{I}(\varphi_{1})\cdot\mathfrak{I}(\varphi_{2})\subset\mathfrak{I}(\varphi_{1}+\varphi_{2}).
\]
\begin{proof}
Suppose that
\[
\begin{split}
\varphi_{1}&\sim a\log(|f_{1}|^{2}+\cdots+|f_{N}|^{2}),
\end{split}
\]
and
\[
\begin{split}
\varphi_{2}&\sim b\log(|g_{1}|^{2}+\cdots+|g_{M}|^{2}),
\end{split}
\]
where $f_{i},g_{j}$ are holomorphic functions. We define $\mathscr{S},\mathscr{W}$ to be the sheaves of holomorphic functions $h,k$ such that
\[
|h|^{2}e^{-\frac{\varphi_{1}}{a}}\leqslant C\textrm{ and }|k|^{2}e^{-\frac{\varphi_{2}}{b}}\leqslant C
\]
respectively. Let $\mu:\tilde{X}\rightarrow X$ be a log-resolution such that $\mu^{\ast}\mathscr{S},\mu^{\ast}\mathscr{W}$ are invertible sheaves $\mathcal{O}_{\tilde{X}}(-D_{1}),\mathcal{O}_{\tilde{X}}(-D_{2})$ of the normal crossing divisors $D_{1}=\sum\lambda_{i}D_{i},D_{2}=\sum\tau_{i}D_{i}$, where $D_{j}$ is the component of the exceptional divisor of $\tilde{X}$. Thus we get that
\[
\begin{split}
\mathfrak{I}(\varphi_{1})&=\mu_{\ast}\mathcal{O}_{\tilde{X}}(\sum(-\lceil a\lambda_{i}\rceil)D_{i}),\\
\mathfrak{I}(\varphi_{2})&=\mu_{\ast}\mathcal{O}_{\tilde{X}}(\sum(-\lceil b\tau_{i}\rceil)D_{i}),
\end{split}
\]
and
\[
\mathfrak{I}(\varphi_{1}+\varphi_{2})=\mu_{\ast}\mathcal{O}_{\tilde{X}}(\sum(-\lceil a\lambda_{i}+b\tau_{i}\rceil)D_{i}).
\]
Then the conclusion follows easily from the fact that
\[
\lceil a\lambda_{i}\rceil+\lceil b\tau_{i}\rceil\geqslant\lceil a\lambda_{i}+b\tau_{i}\rceil.
\]
\end{proof}
\end{proposition}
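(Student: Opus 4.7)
The plan is to carry out the argument on a single birational model that simultaneously resolves both singular weights. Any pair of analytic singularities admits a common log-resolution $\mu\colon\tilde{X}\to X$: first take a log-resolution for $\varphi_{1}$, then blow up further so that the sheaf associated with $\varphi_{2}$ also becomes principal with normal crossing support. On such a $\tilde{X}$ the two singular loci are encoded by divisors $\sum\lambda_{i}D_{i}$ and $\sum\tau_{i}D_{i}$ supported on a common set of prime components $\{D_{i}\}$, allowing some multiplicities to vanish. Since generators for the sheaf attached to $\varphi_{1}+\varphi_{2}$ involve the same families of functions $f_{j}$ and $g_{k}$, the same $\mu$ automatically log-resolves $\varphi_{1}+\varphi_{2}$ as well, with multiplicities $a\lambda_{i}+b\tau_{i}$ along the $D_{i}$.

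With this setup, Definition \ref{d22} gives
\[
\mathfrak{I}(\varphi_{1})=\mu_{\ast}\mathcal{O}_{\tilde{X}}\bigl(-\sum_{i}\lceil a\lambda_{i}\rceil D_{i}\bigr),\quad \mathfrak{I}(\varphi_{2})=\mu_{\ast}\mathcal{O}_{\tilde{X}}\bigl(-\sum_{i}\lceil b\tau_{i}\rceil D_{i}\bigr),
\]
and likewise
\[
\mathfrak{I}(\varphi_{1}+\varphi_{2})=\mu_{\ast}\mathcal{O}_{\tilde{X}}\bigl(-\sum_{i}\lceil a\lambda_{i}+b\tau_{i}\rceil D_{i}\bigr).
\]
For any local sections $f_{1}\in\mathfrak{I}(\varphi_{1})$ and $f_{2}\in\mathfrak{I}(\varphi_{2})$, the pullback $\mu^{\ast}(f_{1}f_{2})$ therefore lies in $\mathcal{O}_{\tilde{X}}\bigl(-\sum_{i}(\lceil a\lambda_{i}\rceil+\lceil b\tau_{i}\rceil)D_{i}\bigr)$.

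The remaining step is purely combinatorial: the elementary inequality $\lceil x\rceil+\lceil y\rceil\geqslant\lceil x+y\rceil$ valid for all $x,y\in\mathbb{R}$ yields the inclusion of invertible sheaves
\[
\mathcal{O}_{\tilde{X}}\bigl(-\sum_{i}(\lceil a\lambda_{i}\rceil+\lceil b\tau_{i}\rceil)D_{i}\bigr)\subset\mathcal{O}_{\tilde{X}}\bigl(-\sum_{i}\lceil a\lambda_{i}+b\tau_{i}\rceil D_{i}\bigr),
\]
so $\mu^{\ast}(f_{1}f_{2})$ is in fact a section of the right-hand sheaf, and pushing down gives $f_{1}f_{2}\in\mathfrak{I}(\varphi_{1}+\varphi_{2})$. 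The only point requiring real care is arranging the common log-resolution and verifying that it handles the sum $\varphi_{1}+\varphi_{2}$ as well; beyond that the argument is essentially bookkeeping driven by the ceiling inequality, which is where the reverse direction (equality) would fail in general, reflecting the difference between $\mathfrak{I}$ and $\mathscr{I}$ already flagged in Proposition \ref{p22}.
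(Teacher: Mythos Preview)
Your argument is essentially the same as the paper's: take a common log-resolution for $\varphi_{1}$ and $\varphi_{2}$, observe it also resolves $\varphi_{1}+\varphi_{2}$ with divisorial weights $a\lambda_{i}+b\tau_{i}$, and conclude via the ceiling inequality $\lceil a\lambda_{i}\rceil+\lceil b\tau_{i}\rceil\geqslant\lceil a\lambda_{i}+b\tau_{i}\rceil$. You spell out the existence of the common resolution and the sectionwise check a bit more carefully, but the method is identical.
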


We prove a Koll\'{a}r-type injectivity theorem to finish this subsection. The more discussion about the modified ideal sheaf can be found in our papers \cite{Wu20}.

Remember that $\psi$ is a function on $X$ induced by $h_{0}$. Let 
\[
\mathcal{H}^{n,q}_{\leqslant0}(X,L\otimes\mathfrak{I}(L)):=\{\alpha\in\mathcal{H}^{n,q}_{\leqslant0}(X,L);\int_{X}|\alpha|^{2}e^{-\psi}<\infty\}.
\]

\begin{proposition}[A Koll\'{a}r-type injectivity theorem]\label{p25}
Assume that $X$ is a compact K\"{a}hler manifold. Let $s$ be a section of some multiple $L^{k-1}$ such that
\[
s\in H^{0}(X,L^{k-1}\otimes\mathfrak{I}(L^{k-1})).
\]
Then the following map
\[
\mathcal{H}^{n,q}_{\leqslant0}(X,L\otimes\mathfrak{I}(L))\xrightarrow{\otimes s}\mathcal{H}^{n,q}_{\leqslant0}(X,L^{k}\otimes\mathfrak{I}(L^{k}))
\]
induced by tensor with $s$ is injective.
\begin{proof}
By Proposition \ref{p21}, we have
\[
\begin{split}
\mathcal{H}^{n,q}_{\leqslant0}(X,L\otimes\mathfrak{I}(L))&\subset H^{n,q}(X,L\otimes\mathcal{I}(L))\\
\mathcal{H}^{n,q}_{\leqslant0}(X,L^{k}\otimes\mathfrak{I}(L^{k}))&\subset H^{n,q}(X,L^{k}\otimes\mathcal{I}(L^{k}))
\end{split}
\]
respectively.

By Gongyo--Matsumura's injectivity theorem \cite{GoM17},
\[
[\alpha]\in H^{n,q}(X,L\otimes\mathcal{I}(L))
\]
maps as $[s\alpha]$ injectively into $H^{n,q}(X,L^{k}\otimes\mathcal{I}(L^{k}))$. One verifies the section $s$ here must satisfy the condition in their theorem.

Now we consider the
\[
\alpha\in\mathcal{H}^{n,q}_{\leqslant0}(X,L\otimes\mathfrak{I}(L)).
\]
Since
\[
s\in H^{0}(X,L^{k-1}\otimes\mathfrak{I}(L^{k-1})),
\]
$s\alpha\in\mathcal{H}^{n,q}_{\leqslant0}(X,L^{k}\otimes\mathfrak{I}(L^{k}))$ by superadditivity (Proposition \ref{p23}). The proof is finished.
\end{proof}
\end{proposition}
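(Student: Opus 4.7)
The plan is to reduce the statement to a standard Kollár-type injectivity result on Dolbeault cohomology with multiplier ideals, using the integrability encoded in the modified ideal sheaf both to realise the eigenform spaces as subspaces of cohomology and to guarantee that the image of multiplication by $s$ lands in the smaller modified-ideal-sheaf space on the right.

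The first step is to view each eigenform $\alpha\in\mathcal{H}^{n,q}_{\leqslant0}(X,L\otimes\mathfrak{I}(L))$ as a $\bar\partial$-closed smooth form whose integrability against $e^{-\psi}$ is at least as strong as against $e^{-\phi_{0}}$, because $\mathfrak{I}(L)\subset\mathcal{I}(L)$ by Proposition \ref{p22}(1). Hence $\alpha$ represents a class in $H^{n,q}(X,L\otimes\mathcal{I}(L))$, and the resulting assignment is injective: it factors through the inclusion $\mathcal{H}^{n,q}_{\leqslant0}(X,L\otimes\mathfrak{I}(L))\hookrightarrow\mathcal{H}^{n,q}_{\leqslant0}(X,L\otimes\mathcal{I}(L))$, which is in turn isomorphic to $H^{n,q}(X,L\otimes\mathcal{I}(L))$ by the second part of Proposition \ref{p21}.

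Next I would invoke the Gongyo--Matsumura injectivity theorem \cite{GoM17} on the compact K\"ahler manifold $X$: tensoring with $s$ induces an injection
\[
H^{n,q}(X,L\otimes\mathcal{I}(L))\xrightarrow{\,\otimes s\,}H^{n,q}(X,L^{k}\otimes\mathcal{I}(L^{k})),
\]
provided that $s$ satisfies the pointwise estimate $|s|^{2}_{h_{0}^{k-1}}\leqslant C$. This is exactly where Proposition \ref{p22}(2) enters: the hypothesis $s\in H^{0}(X,L^{k-1}\otimes\mathfrak{I}(L^{k-1}))$ forces $|s|^{2}e^{-(k-1)\phi_{0}}$ to be bounded and to vanish near the poles of $\phi_{0}$, so the required hypothesis of \cite{GoM17} is met. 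Matching the precise form of the integrability condition in \cite{GoM17} with our nef-approximation setup for $h_{0}$ is the step I expect to be the main technical obstacle, since their statement is phrased for a chosen singular metric on a pseudo-effective bundle rather than for a limit of smooth metrics on a nef bundle; I would work this out by approximating with the $h_{\varepsilon}$ and passing to the limit as in the proof of Proposition \ref{p21}.

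For the last step, given $\alpha\in\mathcal{H}^{n,q}_{\leqslant0}(X,L\otimes\mathfrak{I}(L))$, I would use the superadditivity of Proposition \ref{p23} to confirm that $s\alpha$ determines an element of $\mathcal{H}^{n,q}_{\leqslant0}(X,L^{k}\otimes\mathfrak{I}(L^{k}))$: the inclusion $\mathfrak{I}(L^{k-1})\cdot\mathfrak{I}(L)\subset\mathfrak{I}(L^{k})$ translates into the required $L^{2}$-integrability of $s\alpha$ against the weight associated to $\mathfrak{I}(L^{k})$, after passing to the $\Delta_{0}$-harmonic representative of $[s\alpha]$. Combining the three steps, if $s\alpha=0$ in the target then $[s\alpha]=0$ in $H^{n,q}(X,L^{k}\otimes\mathcal{I}(L^{k}))$; Gongyo--Matsumura forces $[\alpha]=0$ in $H^{n,q}(X,L\otimes\mathcal{I}(L))$; and the injectivity established in Step 1 gives $\alpha=0$, completing the proof.
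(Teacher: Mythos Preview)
Your proposal is correct and follows essentially the same route as the paper: embed the harmonic spaces into cohomology via Proposition~\ref{p21}, apply Gongyo--Matsumura's injectivity theorem \cite{GoM17} at the level of $H^{n,q}(X,L\otimes\mathcal{I}(L))$, and use superadditivity (Proposition~\ref{p23}) to see that $s\alpha$ lands in the modified-ideal space on the right. Your version is in fact slightly more explicit than the paper's, in that you identify Proposition~\ref{p22}(2) as the reason the boundedness hypothesis on $s$ in \cite{GoM17} is met, and you flag the need to pass to the $\Delta_{0}$-harmonic representative of $[s\alpha]$; the paper leaves both of these as implicit verifications.
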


One refers to \cite{Fuj12,Ko86a,Ko86b,Mat15} for the history of Koll\'{a}r's injectivity theorem.
\subsection{Bergman kernel for the space $\mathcal{H}^{n,q}_{\leqslant\lambda}$}
The estimate of the numbers $h^{n,q}_{\leqslant\lambda}$ is based on an observation about the Bergman kernel. The Bergman kernel at $x\in X$ is defined as the function
\[
   B(x)=\sum|\alpha_{j}(x)|^{2},
\]
where $\{\alpha_{j}\}$ is an orthonormal basis for $\mathcal{H}^{n,q}_{\leqslant\lambda}$, and the norm is the pointwise norm defined by the metrics $h_{0}$ and $\omega$ on $L$ and $X$. More precisely, $B(x)$ is the pointwise trace on the diagonal of the true Bergman kernel, defined as the reproducing kernel for $\mathcal{H}^{n,q}_{\leqslant\lambda}$.

The relevance of $B(x)$ for our problem lies in the formula
\[
   \int_{X}B(x)=h^{n,q}_{\leqslant\lambda},
\]
which is evident since each term in the definition of $B(x)$ contributes a $1$ to the integral. On the other hand, $B(x)$ is intimately related to the solution of the extremal problem
\[
   S(x)=\frac{\sup|\alpha(x)|^{2}}{\|\alpha\|^{2}},
\]
where the supremum is taken over all $\alpha$ in $\mathcal{H}^{n,q}_{\leqslant\lambda}$. Indeed, the following lemma is classical in Bergman's theory of reproducing kernels. Let $E$ be a Hermitian vector bundle of rank $N$ on a manifold $X$. Let $V$ be a subspace of the space of continuous global sections of $E$ whose coefficients are in $L^{2}(X)$, and let $\{\alpha_{j}\}$ be an orthonormal basis for $V$. Define $B(x)$ and $S(x)$ with $\{\alpha_{j}\}$ and space $V$ same as before. Then we have
\begin{lemma}\label{l22}
\[
   S(x)\leqslant B(x)\leqslant NS(x).
\]
In particular,
\[
   \int_{X}S(x)\leqslant\dim(V)\leqslant N\int_{X}S(x).
\]
\end{lemma}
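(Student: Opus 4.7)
The plan is to establish the pointwise inequalities $S(x)\leqslant B(x)\leqslant NS(x)$ by elementary Hilbert-space arguments inside $V$, combined with a book-keeping of the $N$ directions in the fiber $E_{x}$; the integral statement will then drop out immediately by integrating and using orthonormality.

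For the lower bound $S(x)\leqslant B(x)$ I would fix $x\in X$, take an arbitrary nonzero $\alpha\in V$, and expand it in the orthonormal basis as $\alpha=\sum c_{j}\alpha_{j}$, so that $\|\alpha\|^{2}=\sum|c_{j}|^{2}$. Applying the triangle inequality in the fiber $E_{x}$ followed by Cauchy--Schwarz on the scalar sequences $(c_{j})$ and $(|\alpha_{j}(x)|)$ gives $|\alpha(x)|\leqslant\sum|c_{j}||\alpha_{j}(x)|\leqslant\|\alpha\|\cdot B(x)^{1/2}$, so that $|\alpha(x)|^{2}/\|\alpha\|^{2}\leqslant B(x)$; taking the supremum over $\alpha$ yields $S(x)\leqslant B(x)$.

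The upper bound $B(x)\leqslant NS(x)$ is the heart of the matter and is the only place where the rank $N$ enters. I would fix an orthonormal frame $e_{1},\dots,e_{N}$ of $E_{x}$ and, for each index $i$, build the test section
\[
K_{i}(y):=\sum_{j}\overline{\langle\alpha_{j}(x),e_{i}\rangle}\,\alpha_{j}(y)\in V.
\]
A direct computation using the orthonormality of $\{\alpha_{j}\}$ shows $\|K_{i}\|^{2}=\sum_{j}|\langle\alpha_{j}(x),e_{i}\rangle|^{2}$ and $\langle K_{i}(x),e_{i}\rangle=\|K_{i}\|^{2}$, hence $|K_{i}(x)|^{2}\geqslant\|K_{i}\|^{4}$. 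Consequently $\|K_{i}\|^{2}\leqslant|K_{i}(x)|^{2}/\|K_{i}\|^{2}\leqslant S(x)$ whenever $K_{i}\neq 0$ (the vanishing case being trivial). Summing over the $N$ frame directions and using Parseval fiberwise,
\[
B(x)=\sum_{j}|\alpha_{j}(x)|^{2}=\sum_{i=1}^{N}\sum_{j}|\langle\alpha_{j}(x),e_{i}\rangle|^{2}=\sum_{i=1}^{N}\|K_{i}\|^{2}\leqslant NS(x).
\]

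The integral statement is then immediate: by orthonormality $\int_{X}B(x)=\sum_{j}\int_{X}|\alpha_{j}(x)|^{2}=\dim V$, and integrating the pointwise inequalities gives $\int_{X}S(x)\leqslant\dim V\leqslant N\int_{X}S(x)$. There is no serious obstacle in this lemma; the only mildly nonobvious step is the construction of the sections $K_{i}$ in the upper bound. They play the role of the "components" of the reproducing kernel $K(\cdot,x)$ along the frame $\{e_{i}\}$, and it is exactly the need to extremize one fiber direction at a time that forces the factor $N$. I do not expect any analytic subtleties, since all sums involved are either finite (the fiber sum over $i$) or absolutely convergent by orthonormality.
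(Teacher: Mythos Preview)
Your argument is correct and is essentially the classical reproducing-kernel proof of this extremal characterization. The paper itself does not supply a proof of this lemma; it simply refers the reader to \cite{Ber02}, so there is no in-paper argument to compare against. What you have written is the standard derivation (and is in fact the argument given in \cite{Ber02}): Cauchy--Schwarz for the lower bound, and the $N$ fiberwise components $K_i$ of the reproducing kernel for the upper bound.
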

The proof can be found in \cite{Ber02}.

Theorem \ref{t1} therefore follows if we can prove a submeanvalue inequality that estimates the value of a form $\alpha\in\mathcal{H}^{n,q}_{\leqslant\lambda}$ at any point $x\in X$ by its $L^{2}$-norm.

\subsection{Siu's $\partial\bar{\partial}$-Bochner formula}
The $\partial\bar{\partial}$-Bochner formula for an $L$-valued $(n,q)$-form is first developed by Siu in \cite{Siu82} on a compact K\"{a}hler manifold, then it is extended to a general compact complex manifold in \cite{Ber02}. Furthermore, it is generalized in \cite{WaZ19} to a version suitable for a line bundle $L$ tensoring with a vector bundle $E$. For our purpose, we only present the latest version in \cite{WaZ19} here.

\begin{proposition}\label{p27}
Let $(X,\omega)$ be a compact complex manifold. Let $E$ and $L$ be holomorphic vector bundle of rank $r$ and line bundle respectively. Let $\alpha$ be an $L\otimes E$-valued $(n,q)$-form. If $\alpha$ is $\bar{\partial}$-closed, the following inequality holds:
\begin{equation}\label{e4}
   i\partial\bar{\partial}T_{\alpha}\wedge\omega_{q-1}\geqslant(-2\textrm{Re}<\Delta\alpha,\alpha>+<i\Theta_{L\otimes E}\wedge\Lambda\alpha,\alpha>-c|\alpha|^{2})\omega_{n}.
\end{equation}
The constant $c$ is zero if $\bar{\partial}\omega_{q-1}=\bar{\partial}\omega_{q}=0$. Here $T_{\alpha}=c_{n-q}\ast\alpha\wedge\ast\bar{\alpha}e^{-\phi}$.
\end{proposition}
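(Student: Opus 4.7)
The plan is to expand $i\partial\bar{\partial}T_{\alpha}$ via Leibniz and to identify the three kinds of terms that appear after wedging with $\omega_{q-1}$: a Laplacian-type term, a curvature term, and torsion remainders governed by $\bar\partial\omega_{q-1},\bar\partial\omega_q$. I would begin by fixing a local holomorphic frame for $L\otimes E$ with combined weight $\phi$, and observe that when $\alpha$ is an $L\otimes E$-valued $(n,q)$-form, $*\alpha$ is (after the customary anti-linear identification on the bundle side) an $L\otimes E$-valued $(n-q,0)$-form, so that $T_\alpha=c_{n-q}*\alpha\wedge\overline{*\alpha}\,e^{-\phi}$ is a genuine scalar $(n-q,n-q)$-form. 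This transfers the computation from $(n,q)$-forms, where the Kodaira--Akizuki--Nakano identity is messy, to $(n-q,0)$-forms, where things are more tractable.

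Next I would apply $i\partial\bar{\partial}$ to $T_\alpha$ and split the result. The Leibniz expansion produces (i) a quadratic term $c_{n-q}\,i\partial^h(*\alpha)\wedge\overline{\partial^h(*\alpha)}\,e^{-\phi}$, (ii) cross-terms of the form $*\alpha\wedge i\partial\bar\partial\overline{*\alpha}\,e^{-\phi}$ and its conjugate, and (iii) the curvature contribution $*\alpha\wedge\overline{*\alpha}\,i\partial\bar\partial\phi\,e^{-\phi}=i\Theta_{L\otimes E}$-type terms. Using the Chern-connection identity $\bar\partial^{*}=-*\partial^h*$ on $L\otimes E$-valued forms, $\partial^h(*\alpha)=\pm *\bar\partial^*\alpha$, and since $\bar\partial\alpha=0$ by hypothesis, $\langle\Delta\alpha,\alpha\rangle=\|\bar\partial^*\alpha\|^2$. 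Wedging with $\omega_{q-1}$ and invoking the pointwise algebraic identity $\omega_{q-1}\wedge\beta\wedge\bar\beta=|\beta|^2\omega_n\cdot c$ for $(n-q+1,0)$-forms $\beta$ converts term (i) to $-2\operatorname{Re}\langle\Delta\alpha,\alpha\rangle\,\omega_n$ and term (iii) to $\langle i\Theta_{L\otimes E}\wedge\Lambda\alpha,\alpha\rangle\,\omega_n$, producing the two leading terms of \eqref{e4}. Here the bundle $E$ only contributes through $\Theta_{L\otimes E}=\Theta_L\otimes\mathrm{Id}_E+\mathrm{Id}_L\otimes\Theta_E$ and all algebra used is the same as in the line bundle case.

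The main obstacle is the cross-term (ii) together with the remainders that arise when $d\omega\neq 0$. Unlike in the Kähler case, the steps that move $\partial,\bar\partial$ past $\omega_{q-1}$ pick up pieces proportional to $\bar\partial\omega_{q-1}$ and $\bar\partial\omega_q$, and these must be absorbed. The strategy is standard: bound them pointwise in terms of the torsion of $\omega$ times $|\alpha|^2$, producing a uniform constant $c=c(\omega)$; when $\bar\partial\omega_{q-1}=\bar\partial\omega_q=0$ every such contribution vanishes identically and one recovers $c=0$. I would carry out this bookkeeping precisely as in \cite{Siu82} and \cite{Ber02}, with the only modification being the vector-bundle version used in \cite{WaZ19}, where $E$ enters as a passive factor in the inner product and in the curvature. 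The nontrivial part is purely the sign/torsion tracking; the structural inequality follows directly once the three groups of terms are correctly separated.
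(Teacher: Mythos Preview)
The paper does not supply its own proof of this proposition; it simply records ``The proof can be found in \cite{WaZ19}'' and moves on. So there is nothing in the paper to compare your argument against beyond that citation, and your overall strategy---expand $i\partial\bar\partial T_\alpha$ by Leibniz, rewrite everything in terms of the $(n-q,0)$-form $\gamma=\ast\alpha$, and sort the output into first-order, second-order, curvature, and torsion pieces---is indeed the standard route taken in \cite{Siu82,Ber02,WaZ19}.

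That said, your bookkeeping of which term produces which piece of \eqref{e4} is off in a way that matters. You write that the quadratic first-order term (i), namely $c_{n-q}\,i\partial^{h}(\ast\alpha)\wedge\overline{\partial^{h}(\ast\alpha)}\,e^{-\phi}$, ``converts to $-2\mathrm{Re}\langle\Delta\alpha,\alpha\rangle\,\omega_n$''. It does not. After wedging with $\omega_{q-1}$ that term is (up to a positive constant) $|\partial^{h}\gamma|^{2}\omega_n=|\bar\partial^{\ast}\alpha|^{2}\omega_n$, a \emph{nonnegative} quantity; it is precisely the term one \emph{discards} to pass from an identity to the inequality \eqref{e4}. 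The contribution $-2\mathrm{Re}\langle\Delta\alpha,\alpha\rangle\,\omega_n$ is second order in $\alpha$ and must come from your cross-terms (ii), i.e.\ the pieces of type $\gamma\wedge\partial\bar\partial\bar\gamma$ and their conjugates. Relatedly, your line ``since $\bar\partial\alpha=0$, $\langle\Delta\alpha,\alpha\rangle=\|\bar\partial^{\ast}\alpha\|^{2}$'' is only valid after integrating over $X$; pointwise $\langle\Delta\alpha,\alpha\rangle=\langle\bar\partial\bar\partial^{\ast}\alpha,\alpha\rangle$ has no reason to be nonnegative, and the formula \eqref{e4} is a pointwise inequality of top-degree forms. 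Once you reassign (i) and (ii) correctly and drop the spurious pointwise identity, the rest of your outline (curvature from (iii), torsion remainders bounded by $c(\omega)|\alpha|^2$ and vanishing when $\bar\partial\omega_{q-1}=\bar\partial\omega_q=0$) matches the argument in the cited references.
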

The proof can be found in \cite{WaZ19}.

If we denote $\gamma=\ast\alpha$, $\alpha$ can be expressed as
\[
   \alpha=c_{n-q}\gamma\wedge\omega_{q},
\]
and we moreover have
\[
    \ast\gamma=(-1)^{n-q}c_{n-q}\gamma\wedge\omega_{q}.
\]
Then $|\alpha|^{2}\omega_{n}=T_{\alpha}\wedge\omega_{q}$, so the norm of $\alpha$ is given by the trace of $T_{\alpha}$.
\subsection{Two canonical metrics}
In this subsection, we present two examples of singular metrics that provide analytic Zariski decomposition for the modified ideal sheaf. Let $L$ be a line bundle on a compact complex manifold $X$. In \cite{Siu98}, Siu introduces a special singular metric $\phi_{\textrm{siu}}$ as follows. For a basis $\{s^{k}_{j}\}^{N_{k}}_{j=1}$ of $H^{0}(X,L^{k})$, the metric $\phi_{k}$ is defined by
\[
   \phi_{k}:=\frac{1}{k}\log\sum^{N_{k}}_{j=1}|s^{k}_{j}|^{2}.
\]
Take a convergent sequence $\{\varepsilon_{k}\}$, and the Siu-type metric $\phi_{\textrm{siu}}$ on $L$ is then defined by
\[
   \phi_{\textrm{siu}}:=\log\sum^{\infty}_{k=1}\varepsilon_{k}e^{\phi_{k}}.
\]
Certainly $\phi_{\textrm{siu}}$ is pseudo-effective and provides an analytic Zariski decomposition.
\begin{proposition}[Analytic Zariski decomposition]\label{p28}
For all $k\geqslant0$, we have
\[
\begin{split}
    H^{0}(X,L^{k}\otimes\mathfrak{I}(h^{k}_{\mathrm{siu}}))=H^{0}(X,L^{k}\otimes\mathscr{I}(h^{k}_{\mathrm{siu}}))=H^{0}(X,L^{k}).
\end{split}
\]
\end{proposition}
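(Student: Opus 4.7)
The plan is to prove the three $H^{0}$'s coincide by sandwiching them in an obvious chain of inclusions and then showing that the chain actually closes. The inclusions
\[
H^{0}(X,L^{k}\otimes\mathfrak{I}(h^{k}_{\mathrm{siu}}))\subset H^{0}(X,L^{k}\otimes\mathscr{I}(h^{k}_{\mathrm{siu}}))\subset H^{0}(X,L^{k})
\]
are immediate: the first from the containment $\mathfrak{I}\subset\mathscr{I}$ in Proposition~\ref{p22}(1), and the second because $\mathscr{I}(h^{k}_{\mathrm{siu}})$ is a subsheaf of $\mathcal{O}_{X}$. Hence it suffices to show that every $s\in H^{0}(X,L^{k})$ already lies in $H^{0}(X,L^{k}\otimes\mathfrak{I}(h^{k}_{\mathrm{siu}}))$.

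The first real step is to establish the pointwise estimate that $|s|^{2}e^{-k\phi_{\mathrm{siu}}}$ is globally bounded on $X$. Keeping only the $k$-th term in the defining series for $\phi_{\mathrm{siu}}$ gives
\[
e^{k\phi_{\mathrm{siu}}}\geqslant\varepsilon_{k}^{k}e^{k\phi_{k}}=\varepsilon_{k}^{k}\sum_{j=1}^{N_{k}}|s_{j}^{k}|^{2}.
\]
Writing $s=\sum c_{j}s_{j}^{k}$ and applying Cauchy--Schwarz produces $|s|^{2}\leqslant\bigl(\sum|c_{j}|^{2}\bigr)\sum_{j}|s_{j}^{k}|^{2}$, and combining the two inequalities yields a uniform bound $|s|^{2}e^{-k\phi_{\mathrm{siu}}}\leqslant C\varepsilon_{k}^{-k}$. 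In particular, $|s|^{2}e^{-k\phi_{\mathrm{siu}}}$ is locally $L^{1}$, so $s\in H^{0}(X,L^{k}\otimes\mathscr{I}(h^{k}_{\mathrm{siu}}))$.

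To upgrade this to the modified ideal sheaf, I would pass to the log-resolution $\mu:\tilde{X}\to X$ that computes $\mathfrak{I}(h^{k}_{\mathrm{siu}})$, so that $\mu^{*}(k\phi_{\mathrm{siu}})\sim ka\sum_{j}\lambda_{j}\log|g_{j}|^{2}$ near the exceptional divisors $D_{j}$. The boundedness of Step~2 then pulls back to $|\mu^{*}s|^{2}\leqslant C\prod_{j}|g_{j}|^{2ka\lambda_{j}}$, which forces $\mathrm{ord}_{D_{j}}(\mu^{*}s)\geqslant ka\lambda_{j}$. The key conceptual observation is that $\mu^{*}s$ is holomorphic, so its vanishing order along $D_{j}$ is automatically an integer, hence in fact $\mathrm{ord}_{D_{j}}(\mu^{*}s)\geqslant\lceil ka\lambda_{j}\rceil$. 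Invoking the very definition $\mathfrak{I}(h^{k}_{\mathrm{siu}})=\mu_{*}\mathcal{O}_{\tilde{X}}(\sum(-\lceil ka\lambda_{j}\rceil)D_{j})$ then places $s$ inside $H^{0}(X,L^{k}\otimes\mathfrak{I}(h^{k}_{\mathrm{siu}}))$, closing the chain.

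The main obstacle I expect is not the integrality-of-vanishing-order argument itself, which is clean, but rather verifying that $\phi_{\mathrm{siu}}$ really does possess analytic singularities of the shape demanded by Definition~\ref{d22}, since the defining series $\sum\varepsilon_{k}e^{\phi_{k}}$ is infinite. One has to argue either that the singularity structure stabilizes after finitely many terms (with the remaining tail absorbed into a bounded correction, so the singular part is governed by a finite truncation) or otherwise make sense of the log-resolution in a limiting fashion. Once this technical point is dispatched, the proof proceeds along the three steps above.
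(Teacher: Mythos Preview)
The paper states Proposition~\ref{p28} without proof, treating it as an evident consequence of the construction of $\phi_{\mathrm{siu}}$. Your argument is correct and supplies exactly the details the paper omits. The chain of inclusions and the boundedness step are right; the integrality-of-vanishing-order argument in your third step is the clean way to pass from ``$|s|^{2}e^{-k\phi_{\mathrm{siu}}}$ bounded'' to membership in $\mathfrak{I}$, and in fact it establishes the converse of Proposition~\ref{p22}(2): the modified ideal $\mathfrak{I}(\varphi)$ consists precisely of those germs $f$ for which $|f|^{2}e^{-\varphi}$ is locally bounded.

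Your concern about whether $\phi_{\mathrm{siu}}$ genuinely has analytic singularities is legitimate, and it is a gap in the paper's presentation rather than in your reasoning. The infinite series defining $\phi_{\mathrm{siu}}$ need not produce analytic singularities in general, so Definition~\ref{d22} does not literally apply. The paper is tacitly assuming this hypothesis: Section~2.5 presents $h_{\mathrm{siu}}$ and $h_{\min}$ as examples feeding into Theorem~\ref{t4}, whose statement explicitly requires the metric to have analytic singularities (and for $h_{\min}$ the paper says so outright). Under that standing assumption your proof goes through without change; without it, the symbol $\mathfrak{I}(h^{k}_{\mathrm{siu}})$ is not even defined in the paper's framework, so there is nothing further to prove.
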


Apart from $h_{\textrm{siu}}$, one could also consider the metric $h_{\textrm{min}}$ with minimal singularity.
\begin{definition}\label{d23}
Let $L$ be a pseudo-effective line bundle. Consider two Hermitian metrics $h_{1},h_{2}$ on $L$ with curvature $i\Theta_{L,h_{j}}\geqslant0$ in the sense of currents.

1. We will write $h_{1}\preceq h_{2}$, and say that $h_{1}$ is less singular than $h_{2}$, if there exits a constant $C>0$ such that $h_{1}\leqslant Ch_{2}$.

2. We will write $h_{1}\sim h_{2}$, and say that $h_{1},h_{2}$ are equivalent with respect to singularities, if there exists a constant $C>0$ such that $C^{-1}h_{2}\leqslant h_{1}\leqslant Ch_{2}$.
\end{definition}

The above definition is motivated by the following observation.
\begin{lemma}(\cite{DPS01})\label{l23}
For every pseudo-effective line bundle $L$, there exits up to equivalence of singularities a unique Hermitian metric $h_{\textrm{min}}$ with minimal singularities such that $i\Theta_{L,h_{\textrm{min}}}\geqslant0$.
\end{lemma}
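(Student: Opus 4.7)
The plan is to realize $h_{\min}$ as an envelope of all singular Hermitian metrics on $L$ with semi-positive curvature current. First I would fix a smooth reference metric $h_{\mathrm{ref}}$ on $L$ (without any positivity) and write an arbitrary singular metric as $h=h_{\mathrm{ref}}e^{-\psi}$. The curvature condition $i\Theta_{L,h}\geqslant 0$ then translates into $\theta+i\partial\bar{\partial}\psi\geqslant 0$ in the sense of currents, where $\theta:=i\Theta_{L,h_{\mathrm{ref}}}$ is a fixed smooth $(1,1)$-form. Thus the problem reduces to finding, up to bounded differences, a largest $\theta$-plurisubharmonic function on $X$.

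Next I would consider the family $\mathcal{F}:=\{\psi:X\to[-\infty,0)\ ;\ \psi\text{ is }\theta\text{-psh and }\sup_{X}\psi=0\}$. Pseudo-effectivity of $L$ guarantees $\mathcal{F}\neq\emptyset$, and the normalization $\sup_{X}\psi=0$ together with compactness of $X$ gives a uniform $L^{1}$-bound on $\mathcal{F}$, so the pointwise supremum $\widetilde{\psi}:=\sup_{\psi\in\mathcal{F}}\psi$ is finite almost everywhere and bounded above on $X$. I would then form the upper semicontinuous regularization $\psi_{\max}:=\widetilde{\psi}^{\,\ast}$; by Choquet's lemma one may extract a countable subfamily achieving the supremum, and standard pluripotential theory (the fact that an a.e.-bounded upper envelope of $\theta$-psh functions has a $\theta$-psh u.s.c.\ regularization) shows that $\psi_{\max}$ is $\theta$-psh. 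Setting $h_{\min}:=h_{\mathrm{ref}}e^{-\psi_{\max}}$ produces a singular Hermitian metric with $i\Theta_{L,h_{\min}}\geqslant 0$.

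Finally I would verify the two remaining claims. For minimality of singularities, any competing metric $h=h_{\mathrm{ref}}e^{-\psi}$ with $i\Theta_{L,h}\geqslant 0$ satisfies $\psi-\sup_{X}\psi\in\mathcal{F}$, hence $\psi\leqslant\psi_{\max}+\sup_{X}\psi$, giving $h_{\min}\leqslant C h$ pointwise, which is exactly $h_{\min}\preceq h$ in Definition \ref{d23}. For uniqueness up to equivalence, if $h'_{\min}$ is another metric with minimal singularities then $h_{\min}\preceq h'_{\min}$ and $h'_{\min}\preceq h_{\min}$, so $h_{\min}\sim h'_{\min}$.

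The main obstacle will be the regularity step: one must ensure that $\widetilde{\psi}$ is not identically $-\infty$ after normalization and that the u.s.c.\ regularization $\psi_{\max}$ remains $\theta$-psh rather than merely almost everywhere equal to a $\theta$-psh function. This is exactly where the compactness of $X$ and the classical Hartogs-type lemma for $\theta$-psh functions (upper envelope of a locally bounded family is, after u.s.c.\ regularization, again $\theta$-psh) are essential; invoking these tools rigorously is the heart of the argument, and everything else is a formal consequence of the envelope construction.
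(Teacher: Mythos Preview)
Your proposal is correct and follows the standard envelope construction due to Demailly--Peternell--Schneider. Note that the paper itself does not prove this lemma: it is stated with a citation to \cite{DPS01} and no argument is given, so there is no ``paper's own proof'' to compare against. Your approach is precisely the one used in that reference, so nothing further is needed.
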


Certainly we have
\[
H^{0}(X,L^{k}\otimes\mathscr{I}(h^{k}_{\textrm{min}}))=H^{0}(X,L^{k})\textrm{ for all }k\geqslant0.
\]

Moreover, if $h_{\textrm{min}}$ has analytic singularities, we have
\[
H^{0}(X,L^{k}\otimes\mathfrak{I}(h^{k}_{\textrm{min}}))=H^{0}(X,L^{k})\textrm{ for all }k\geqslant0.
\]

\section{A submeanvalue inequality for the $\Delta_{0}$-eigenforms and the estimate of $h^{n,q}_{\leqslant\lambda}$}
This section is devoted to prove a submeanvalue inequality. The argument here is borrowed from \cite{Ber02} with necessary adjustment. Here and in the later part of this paper, $L$ is assumed to be a nef line bundle on a compact complex manifold $X$ unless specified, and
\[
\mathcal{H}^{p,q}_{\leqslant\lambda}(X,L\otimes E,\Delta_{0})
\]
is the eigenform space defined in Sect.2, which is briefly denoted by $\mathcal{H}^{p,q}_{\leqslant\lambda}(X,L\otimes E)$. $E$ is a vector bundle.

Fix a point $x$ in $X$ and choose local coordinates, $z=(z_{1},...,z_{n})$ near $x$ such that $z(x)=0$ and the metric form $\omega$ on $X$ satisfying
\[
    \omega=\frac{i}{2}\partial\bar{\partial}|z|^{2}=:\beta
\]
at the point $x$. The next proposition is the crucial step in this argument.
\begin{proposition}\label{p31}
With the same notations as in Sect.2, let
\[
\alpha\in\mathcal{H}^{n,q}_{\leqslant\lambda}(X,L^{k}\otimes E\otimes\mathcal{I}(L^{k}))
\]
satisfy $\bar{\partial}\alpha=0$. Then for $r<\lambda^{-1/2}$ and $r<c_{0}$,
\[
    \int_{|z|<r}|\alpha|^{2}_{h^{k}_{0}}\leqslant Cr^{2q}(\lambda+1)^{q}\int_{X}|\alpha|^{2}_{h^{k}_{0}}.
\]
The constants $c_{0}$ and $C$ are independent of $k$, $\lambda$ and the point $x$.
\begin{proof}
Since $L$ is nef, there exists a family of smooth metrics $\{h_{\varepsilon}\}$ such that $i\Theta_{L,h_{\varepsilon}}\geqslant-\varepsilon\omega$ for every $\varepsilon>0$. Moreover, the limit of (a subsequence of) $\{h_{\varepsilon}\}$ exits, and is denoted by $h_{0}$. We apply Proposition \ref{p27} to $(L^{k},h^{k}_{\varepsilon})$. The expression $<i\Theta_{L^{k}\otimes E}\wedge\Lambda\alpha,\alpha>\omega_{n}$ can be estimated from below by a constant $c(1-k\varepsilon)$ times $|\alpha|^{2}_{h^{k}_{\varepsilon}}$, so we get
\begin{equation}\label{e5}
    i\partial\bar{\partial}T_{\alpha,\varepsilon}\wedge\omega_{q-1}\geqslant(-2\textrm{Re}<\Delta_{\varepsilon}\alpha,\alpha>-c^{\prime} (1+k\varepsilon)|\alpha|^{2}_{h^{k}_{\varepsilon}})\omega_{n}.
\end{equation}
Here $T_{\alpha,\varepsilon}=c_{n-q}\ast\alpha\wedge\ast\bar{\alpha}e^{-\phi_{\varepsilon}}$, where $\phi_{\varepsilon}$ is the weight function of $h_{\varepsilon}$. For $r$ small, put
\[
    \sigma(r,\varepsilon)=\int_{|z|<r}|\alpha|^{2}_{h^{k}_{\varepsilon}}\omega_{n},
\]
then it is left to prove that
\[
    \sigma(r,0)\leqslant Cr^{2q}(\lambda+1)^{q}
\]
if we have normalized so that the $L^{2}$-norm of $\alpha$ with respect to $h_{0}$ is equal to $1$.

From (\ref{e5}) we see that
\begin{equation}\label{e6}
\begin{split}
    &\int_{|z|<r}(r^{2}-|z|^{2})i\partial\bar{\partial}T_{\alpha,\varepsilon}\wedge\omega_{q-1}\\
    \geqslant&-c^{\prime}(1+k\varepsilon)r^{2}\sigma(r,\varepsilon)- 2r^{2}\int_{|z|<r}|\Delta_{\varepsilon}\alpha|_{h^{k}_{\varepsilon}}|\alpha|_{h^{k}_{\varepsilon}}\omega_{n}.
\end{split}
\end{equation}

Put
\[
   \lambda(r,\varepsilon)=(\int_{|z|<r}|\Delta_{\varepsilon}\alpha|^{2}_{h^{k}_{\varepsilon}})^{1/2},
\]
and use Cauchy's inequality to obtain
\[
   \int_{|z|<r}|\Delta_{\varepsilon}\alpha|_{h^{k}_{\varepsilon}}|\alpha|_{h^{k}_{\varepsilon}}\omega_{n}\leqslant\lambda(r,\varepsilon) \sigma(r,\varepsilon)^{1/2}.
\]
Applying Stokes' formula to the left hand side of (\ref{e6}) we get, since $\beta=\frac{i}{2}\partial\bar{\partial}|z|^{2}$,
\begin{equation}\label{e7}
\begin{split}
   &2\int_{|z|<r}T_{\alpha,\varepsilon}\wedge\omega_{q-1}\wedge\beta\\
   \leqslant&\int_{|z|=r}T_{\alpha,\varepsilon}\wedge\omega_{q-1}\wedge\partial|z|^{2}+c^{\prime}(1+k\varepsilon)r^{2}\sigma(r,\varepsilon) +2r^{2}\sigma(r)^{1/2}\lambda(r,\varepsilon).
\end{split}
\end{equation}
Since $\omega$ is smooth, by the choice of local coordinates we have,
\[
   (1-O(r))\omega\leqslant\beta\leqslant(1-O(r))\omega.
\]
Hence
\[
   T_{\alpha,\varepsilon}\wedge\omega_{q-1}\wedge\beta\geqslant q(1-O(r))|\alpha|^{2}_{h^{k}_{\varepsilon}}\omega_{n}.
\]
Next, if $\omega=\beta$ the boundary integral in (\ref{e7}) can be estimated by an integral with respect to surface measure
\[
   r\int_{|z|=r}|\alpha|^{2}_{h^{k}_{\varepsilon}}dS,
\]
and this implies that in our case,
\[
   \int_{|z|=r}T_{\alpha,\varepsilon}\wedge\omega_{q-1}\wedge\partial|z|^{2}\leqslant r(1-O(r))\int_{|z|=r}|\alpha|^{2}_{h^{k}_{\varepsilon}}(\omega_{n}/\beta_{n})dS.
\]
But
\[
   \int_{|z|=r}|\alpha|^{2}_{h^{k}_{\varepsilon}}(\omega_{n}/\beta_{n})dS=\frac{d}{dr}\sigma(r,\varepsilon),
\]
so if we also incorporate the term $c^{\prime}r^{2}\sigma(r,\varepsilon)$ in $O(r)\sigma(r,\varepsilon)$, we get
\[
   2q(1-O(r))\sigma(r,\varepsilon)\leqslant r\frac{d}{dr}\sigma(r,\varepsilon)+2r^{2}\sigma(r,\varepsilon)^{1/2}\lambda(r,\varepsilon)+k\varepsilon r^{2}\sigma(r,\varepsilon).
\]
Notice the fact that $\alpha\in\mathcal{H}^{n,q}_{\leqslant\lambda}(X,L^{k}\otimes E\otimes\mathcal{I}(L^{k}))$,
\[
\begin{split}
\lim_{\varepsilon\rightarrow0}\sigma(r,\varepsilon)&=\sigma(r,0),\\
\lim_{\varepsilon\rightarrow0}\frac{d}{dr}\sigma(r,\varepsilon)&=\frac{d}{dr}\sigma(r,0),
\end{split}
\]
and
\[
\lim_{\varepsilon\rightarrow0}\lambda(r,\varepsilon)=\lambda(r,0).
\]
Therefore we have
\begin{equation}\label{e8}
   2q(1-O(r))\sigma(r,0)\leqslant r\frac{d}{dr}\sigma(r,0)+2r^{2}\sigma(r,0)^{1/2}\lambda(r,0)
\end{equation}
as $\varepsilon$ tends to zero. Then it follows the same analytic technique as in \cite{Ber02}, we conclude our desired result.
\end{proof}
\end{proposition}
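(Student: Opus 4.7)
The plan is to follow the Bochner--submeanvalue strategy of \cite{Ber02}, handling the lack of smoothness of $h_{0}$ by working with the smooth approximants $h_{\varepsilon}$ and passing to the limit at the end. By Definition \ref{d21}, for each small $\varepsilon$ there is a $\Delta_{\varepsilon}$-eigenform $\alpha_{\varepsilon}\in[\alpha]$ with common eigenvalue $\mu\leqslant\lambda$ converging to $\alpha$ in $L^{2}(h_{0}^{k})$. Since $\bar\partial\alpha=0$ and $\alpha_{\varepsilon}-\alpha$ is $\bar\partial$-exact, each $\alpha_{\varepsilon}$ is $\bar\partial$-closed, so Proposition \ref{p27} applies. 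Bounding the curvature contribution from below by $-c'(1+k\varepsilon)|\alpha_{\varepsilon}|^{2}_{h^{k}_{\varepsilon}}$, using $i\Theta_{L,h_{\varepsilon}}\geqslant-\varepsilon\omega$, yields the pointwise Bochner inequality for the trace form $T_{\alpha_{\varepsilon},\varepsilon}:=c_{n-q}\ast\alpha_{\varepsilon}\wedge\ast\bar{\alpha}_{\varepsilon}e^{-\phi_{\varepsilon}}$.

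The second step is to localize at $x$. I choose coordinates $z$ with $\omega=\beta:=\tfrac{i}{2}\partial\bar\partial|z|^{2}$ at $x$, multiply the Bochner inequality by the cutoff $(r^{2}-|z|^{2})$, and integrate over the ball $|z|<r$. Stokes' formula converts the left-hand side into an interior term equal to $2q(1-O(r))\sigma(r,\varepsilon)$ (using the pointwise bound $T_{\alpha_{\varepsilon},\varepsilon}\wedge\omega_{q-1}\wedge\beta\geqslant q(1-O(r))|\alpha_{\varepsilon}|^{2}\omega_{n}$) plus a boundary term bounded by $r\sigma'(r,\varepsilon)(1+O(r))$, where $\sigma(r,\varepsilon):=\int_{|z|<r}|\alpha_{\varepsilon}|^{2}_{h^{k}_{\varepsilon}}\omega_{n}$. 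The pointwise eigenform identity $\Delta_{\varepsilon}\alpha_{\varepsilon}=\mu\alpha_{\varepsilon}$ replaces $\|\Delta_{\varepsilon}\alpha_{\varepsilon}\|_{L^{2}(|z|<r)}$ exactly by $\mu\sigma(r,\varepsilon)^{1/2}$, so Cauchy--Schwarz followed by the limit $\varepsilon\to0$ (which needs only the $L^{2}$-convergence guaranteed by the modified-ideal hypothesis, because all Laplacian terms have been eliminated) produces
\[
(2q-O(r)-2\lambda r^{2})\sigma(r,0)\leqslant r\sigma'(r,0).
\]

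Finally, separating variables gives $(\log\sigma)'(r)\geqslant 2q/r-O(1)-2\lambda r$, and integrating from $r$ up to $R:=\min(c_{0},\lambda^{-1/2})$ yields
\[
\log\frac{\sigma(R,0)}{\sigma(r,0)}\geqslant 2q\log\frac{R}{r}-O(R)-\lambda R^{2}.
\]
Since $R\leqslant c_{0}$ and $\lambda R^{2}\leqslant 1$ the error term is bounded, so after normalizing $\int_{X}|\alpha|^{2}_{h_{0}^{k}}=1$ (hence $\sigma(R,0)\leqslant 1$) we obtain $\sigma(r,0)\leqslant C(r/R)^{2q}$. The case $\lambda\leqslant c_{0}^{-2}$ gives $R=c_{0}$ and the bound $Cr^{2q}$; the case $\lambda>c_{0}^{-2}$ gives $R=\lambda^{-1/2}$ and the bound $Cr^{2q}\lambda^{q}$. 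Both are subsumed by $Cr^{2q}(\lambda+1)^{q}$. The main technical obstacle I anticipate is justifying the $\varepsilon\to0$ passage in the integrated Bochner identity, especially the boundary term involving the radial derivative; the saving grace is that using the eigenform identity to eliminate $\Delta_{\varepsilon}\alpha_{\varepsilon}$ \emph{before} the limit reduces everything to the $L^{2}$-convergence $\alpha_{\varepsilon}\to\alpha$ against $h_{0}^{k}$, which is exactly what the modified-ideal hypothesis furnishes.
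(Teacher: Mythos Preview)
Your argument is essentially the paper's own: apply Siu's $\partial\bar\partial$-Bochner formula with the smooth approximants $h_{\varepsilon}$, multiply by $(r^{2}-|z|^{2})$ and use Stokes to obtain a first-order differential inequality for $\sigma(r)$, let $\varepsilon\to0$, and then integrate the resulting ODE exactly as in \cite{Ber02}. The only tactical difference is that you run the Bochner formula on the eigenform approximants $\alpha_{\varepsilon}$ themselves (so that $\Delta_{\varepsilon}\alpha_{\varepsilon}=\mu\alpha_{\varepsilon}$ holds pointwise and the Laplacian term becomes $\mu\sigma(r,\varepsilon)$ before the limit), whereas the paper applies it to the fixed form $\alpha$ with varying metric and carries the auxiliary quantity $\lambda(r,\varepsilon)=\bigl(\int_{|z|<r}|\Delta_{\varepsilon}\alpha|^{2}_{h^{k}_{\varepsilon}}\bigr)^{1/2}$ through to the limit; your version is a bit cleaner on this point but changes nothing structurally. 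One small terminological slip: the hypothesis in the statement is the \emph{multiplier} ideal $\mathcal{I}(L^{k})$, not the modified ideal $\mathfrak{I}(L^{k})$; what you actually use is simply $\int_{X}|\alpha|^{2}_{h_{0}^{k}}<\infty$, which is exactly what $\mathcal{I}(L^{k})$ provides.
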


With the help of this proposition, the problem is now reduced on a ball with radius $r$. It is left to estimate the weighted $L^{\infty}$-norm of
\begin{equation*}
\alpha\in\mathcal{H}^{n,q}_{\leqslant\lambda}(X,L^{k}\otimes E)
\end{equation*}
(i.e. $\sup_{|z|<r}|\alpha(x)|^{2}_{h^{k}_{0}}$) via its $L^{2}$-norm. However, the classic theory of the elliptic partial differential operator cannot help too much here, since in general $h_{0}$ may be infinite somewhere. Currently we can only prove the submeanvalue inequality for a nef line bundle such that $h_{0}$ has analytic singularities.
\begin{proposition}\label{p32}
Assume that $h_{0}$ has analytic singularities. Let
\[
\alpha\in\mathcal{H}^{n,q}_{\leqslant\lambda}(X,L^{k}\otimes E\otimes\mathfrak{I}(L^{k}))
\]
satisfy $\bar{\partial}\alpha=0$. Then for any $x\in X$
\[
   |\alpha(x)|^{2}_{h^{k}_{0}}\leqslant Ck^{n-q}(\lambda+1)^{q}\int_{X}|\alpha|^{2}_{h^{k}_{0}}\omega_{n}
\]
if $\lambda\leqslant k$ and
\[
   |\alpha(x)|^{2}_{h^{k}_{0}}\leqslant C\lambda^{n}\int_{X}|\alpha|^{2}_{h^{k}_{0}}\omega_{n}
\]
if $\lambda\geqslant k\geqslant1$. The constant is independent of $k$, $\lambda$ and $x$.
\begin{proof}
The proof is mostly borrowed from \cite{Ber02}, which is a clever application of the localization technique. Assume first $\lambda\leqslant k$ and fix $x\in X$. Choose as before local coordinates, $z$, near $x$ such that $z(x)=0$ and $\omega=\frac{i}{2}\partial\bar{\partial}|z|^{2}=\beta$ at the point $x$. Choose also local trivializations of $L$ and $E$ near $x$. Now we take a family of metrics $\{h_{\varepsilon}\}$ with weight functions $\phi_{\varepsilon}$ on $L$ as before. For any $\varepsilon>0$, we may assume the local trivialization is chosen so that the metric $\phi_{\varepsilon}$ on $L$ has the form
\[
   \phi_{\varepsilon}=\sum\mu_{j}|z_{j}|^{2}+o(|z|^{2}).
\]
For any $\alpha$ we express it in terms of the trivialization and local coordinates and put
\[
    \alpha^{(k)}(z)=\alpha(z/\sqrt{k}),
\]
so that $\alpha^{(k)}$ is defined for $|z|<1$ if $k$ is large enough. We also scale the Laplacian by putting
\[
    k\Delta^{(k)}_{\varepsilon}\alpha^{(k)}=(\Delta_{\varepsilon}\alpha)^{(k)}.
\]
It is not hard to see that if $\Delta$ is defined by the metric $\psi$ on $F_{k}$, then $\Delta^{(k)}$ is associated to the line bundle metric $\psi(z/\sqrt{k})$. In particular, if $F_{k}=L^{k}$ and $\psi=k\phi_{\varepsilon}$, then $\Delta^{(k)}_{\varepsilon}$ is associated to
\[
    \sum\mu_{j}|z_{j}|^{2}+o(1),
\]
and hence converges to a $k$-independent elliptic operator. Obviously, the same thing happens even if we substitute $L^{k}$ by $L^{k}\otimes E$ for a vector bundle $E$. It therefore follows from G{\aa}rding's inequality together with Sobolev estimates that
\[
    |\alpha(0)|^{2}_{h^{k}_{\varepsilon}}\leqslant C(\int_{|z|<1}|\alpha^{(k)}|^{2}_{h^{k}_{\varepsilon}}\omega_{n}+\int_{|z|<1}|(\Delta^{(k)}_{\varepsilon})^{m}\alpha^{(k)}|^{2}_{h^{k}_{\varepsilon}} \omega_{n}),
\]
if $m>n/2$. Remember that when $0\in\{h_{0}=\infty\}$, $|\alpha(0)|^{2}_{h^{k}_{\varepsilon}}=0$, we can find a $C_{1}$ independent of $\varepsilon$ such that
\[
|\alpha(0)|^{2}_{h^{k}_{\varepsilon}}\leqslant C_{1}(\int_{|z|<1}|\alpha^{(k)}|^{2}_{h^{k}_{\varepsilon}}\omega_{n}+\int_{|z|<1}|(\Delta^{(k)}_{\varepsilon})^{m}\alpha^{(k)}|^{2}_{h^{k}_{\varepsilon}} \omega_{n})
\]
On the other hand, when $0\notin\{h_{0}=\infty\}$, $h_{0}$ is a smooth metric at 0. Therefore using G{\aa}rding's inequality together with Sobolev estimates against $h_{0}$, we can find a $C_{2}$ such that
\[
|\alpha(0)|^{2}_{h^{k}_{0}}\leqslant C_{2}(\int_{|z|<1}|\alpha^{(k)}|^{2}_{h^{k}_{0}}\omega_{n}+\int_{|z|<1}|(\Delta^{(k)}_{0})^{m}\alpha^{(k)}|^{2}_{h^{k}_{0}}\omega_{n}).
\]

Then combine the two estimates above together, we have
\begin{equation}\label{e9}
|\alpha(0)|^{2}_{h^{k}_{0}}\leqslant C_{3}(\int_{|z|<1}|\alpha^{(k)}|^{2}_{h^{k}_{0}}\omega_{n}+\int_{|z|<1}|(\Delta^{(k)}_{0})^{m}\alpha^{(k)}|^{2}_{h^{k}_{0}} \omega_{n})
\end{equation}
Now
\[
    \int_{|z|<1}|\alpha^{(k)}|^{2}_{h^{k}_{0}}\omega_{n}=k^{n}\int_{|z|<1/\sqrt{k}}|\alpha|^{2}_{h^{k}_{0}}\omega_{n}
\]
and
\[
    \int_{|z|<1}|\Delta^{(k)}_{0}\alpha^{(k)}|^{2}_{h^{k}_{0}}\omega_{n}=k^{n-2m}\int_{|z|<1/\sqrt{k}}|(\Delta_{0})^{m} \alpha|^{2}_{h^{k}_{0}}\omega_{n}.
\]
As a result,
\begin{equation}\label{e10}
    |\alpha(0)|^{2}_{h^{k}_{0}}\leqslant C_{3}(k^{n}\int_{|z|<1/\sqrt{k}}|\alpha|^{2}_{h^{k}_{0}}\omega_{n}+k^{n-2m}\int_{|z|<1/\sqrt{k}}|(\Delta_{0})^{m} \alpha|^{2}_{h^{k}_{0}}\omega_{n}).
\end{equation}
Do the normalization so that the $L^{2}$-norm of $\alpha$ with respect to $h_{0}$ is one. By Proposition \ref{p22}, (1) and Proposition \ref{p31} we have
\[
    k^{n}\int_{|z|<1/\sqrt{k}}|\alpha|^{2}_{h^{k}_{0}}\omega_{n}\leqslant Ck^{n-q}(\lambda+1)^{q},
\]
and
\[
\begin{split}
    k^{n-2m}\int_{|z|<1/\sqrt{k}}|(\Delta_{0})^{m}\alpha|^{2}_{h^{k}_{0}}\omega_{n}&\leqslant Ck^{n-q}(\lambda+1)^{q}(\lambda/k)^{2m}\\
                                                                               &\leqslant Ck^{n-q}(\lambda+1)^{q}.
\end{split}
\]
Combine these two inequalities with (\ref{e10}), we have thus proved the first part of Proposition \ref{p32}. The second statement is much easier. We now apply (\ref{e10}) to the scaling $\alpha^{(\lambda)}$ instead, and get immediately that
\[
    |\alpha(0)|^{2}_{h^{k}_{0}}\leqslant C\lambda^{n}.
\]
\end{proof}
\end{proposition}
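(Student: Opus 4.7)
The plan is to localize around an arbitrary point $x\in X$, rescale to a fixed ball, apply standard elliptic estimates, and then bring in Proposition \ref{p31} to control the resulting $L^2$ integral. Choose local coordinates $z$ near $x$ with $z(x)=0$ and $\omega=\beta$ at $x$, and a local trivialization of $L$ in which $\phi_\varepsilon=\sum\mu_j|z_j|^2+o(|z|^2)$. Introduce the rescaled form $\alpha^{(k)}(z):=\alpha(z/\sqrt{k})$, so that the ball $\{|z|<1/\sqrt{k}\}$ is blown up to the unit ball, and define the rescaled Laplacian $k\Delta^{(k)}_\varepsilon\alpha^{(k)}=(\Delta_\varepsilon\alpha)^{(k)}$. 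The key observation is that $\Delta^{(k)}_\varepsilon$ corresponds to the weight $k\phi_\varepsilon(z/\sqrt{k})\approx\sum\mu_j|z_j|^2$, which is a $k$-independent limiting elliptic operator; hence the ellipticity constants of $\Delta^{(k)}_\varepsilon$ are uniform in $k$ (and similarly after tensoring with the fixed bundle $E$).

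Given uniform ellipticity, G\r{a}rding's inequality combined with Sobolev embedding on the unit ball yields
\[
|\alpha(0)|^{2}\leqslant C\Bigl(\int_{|z|<1}|\alpha^{(k)}|^{2}\omega_{n}+\int_{|z|<1}|(\Delta^{(k)})^{m}\alpha^{(k)}|^{2}\omega_{n}\Bigr)
\]
for any integer $m>n/2$, with $C$ independent of $k$. The subtle point is which metric one uses: $h_\varepsilon$ is smooth but the constant may degenerate as $\varepsilon\to 0$, while $h_0$ is the one we actually want but may be singular. The resolution, which I expect to be the main obstacle, is to split on the pole-set of $h_0$: by Proposition \ref{p22}(2), the assumption $\alpha\in\mathfrak{I}(L^k)$ forces $|\alpha|^2_{h_0^k}$ to vanish at points of $\{h_0=\infty\}$, so the pointwise estimate is trivial there, whereas at any point outside the pole-set $h_0$ is smooth and one can apply the elliptic estimate directly against $h_0$ with a uniform constant. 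Patching these two cases yields the desired inequality with $h^k_0$ on both sides.

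Undoing the scaling, $\int_{|z|<1}|\alpha^{(k)}|^{2}_{h_0^k}\omega_n=k^{n}\int_{|z|<1/\sqrt{k}}|\alpha|^{2}_{h_0^k}\omega_n$, and likewise $\int_{|z|<1}|(\Delta^{(k)}_0)^{m}\alpha^{(k)}|^{2}_{h_0^k}\omega_n=k^{n-2m}\int_{|z|<1/\sqrt{k}}|\Delta_0^m\alpha|^{2}_{h_0^k}\omega_n$. In the regime $\lambda\leqslant k$ we have $1/\sqrt{k}<\lambda^{-1/2}$, so after normalizing $\|\alpha\|_{h_0}=1$, Proposition \ref{p31} applied on the ball of radius $1/\sqrt{k}$ gives $\int_{|z|<1/\sqrt{k}}|\alpha|^{2}_{h_0^k}\leqslant Ck^{-q}(\lambda+1)^{q}$. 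Since $\Delta_0\alpha=\mu\alpha$ with $\mu\leqslant\lambda$, the iterated Laplacian obeys $|\Delta_0^m\alpha|^2_{h_0^k}=\mu^{2m}|\alpha|^2_{h_0^k}\leqslant\lambda^{2m}|\alpha|^2_{h_0^k}$, giving $k^{n-2m}\lambda^{2m}\cdot k^{-q}(\lambda+1)^{q}\leqslant Ck^{n-q}(\lambda+1)^{q}$ since $\lambda/k\leqslant 1$. Adding the two pieces gives the first estimate.

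For the case $\lambda\geqslant k\geqslant 1$, I would repeat the scheme with $\alpha^{(\lambda)}(z):=\alpha(z/\sqrt{\lambda})$ instead, so that one works on the ball of radius $1/\sqrt{\lambda}$, which now satisfies the hypothesis $r<\lambda^{-1/2}$ of Proposition \ref{p31}. Each scaling factor becomes a power of $\lambda$, and a parallel computation yields the bound $|\alpha(0)|^2_{h_0^k}\leqslant C\lambda^{n}$, independent of $k$. Combining Proposition \ref{p31}, this pointwise bound, and Lemma \ref{l22} then delivers Theorem \ref{t1}; the principal technical hurdle throughout is the interplay between the singularities of $h_0$ and the regularity theory of $\Delta_0$, which is handled by the pole-set dichotomy described above.
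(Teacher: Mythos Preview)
Your proposal is correct and follows essentially the same route as the paper: the same rescaling $\alpha^{(k)}(z)=\alpha(z/\sqrt{k})$, the same uniform ellipticity of $\Delta^{(k)}$ via the limiting weight $\sum\mu_j|z_j|^2$, the same G{\aa}rding--Sobolev pointwise estimate, the same pole-set dichotomy (trivial at poles by Proposition~\ref{p22}(2), smooth $h_0$ elsewhere), and the same appeal to Proposition~\ref{p31} after undoing the scaling; the case $\lambda\geqslant k$ via $\alpha^{(\lambda)}$ also matches. The only cosmetic difference is that the paper cites Proposition~\ref{p22}(1) when invoking Proposition~\ref{p31} (to pass from $\mathfrak{I}(L^k)$ to $\mathcal{I}(L^k)$), which you implicitly use as well.
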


We now have all the ingredients for the proof of Theorem \ref{t1}.
\begin{proof}[Proof of Theorem \ref{t1}]
Let first $\mathcal{Z}^{n,q}_{\leqslant\lambda}$ be the subspace of $\mathcal{H}^{n,q}_{\leqslant\lambda}$ consisting of all the $\bar{\partial}$-closed forms. We apply Lemma \ref{l22} with $L^{k}\otimes E$-valued $(n,q)$-forms. The estimate for $S(x)$ furnished by Proposition \ref{p32} together with Proposition \ref{l22} then immediately gives Theorem \ref{t1} for $\mathcal{Z}^{n,q}_{\leqslant\lambda}$. We now claim that
\begin{equation}\label{e11}
   h^{n,q}_{\leqslant\lambda}\leqslant\dim\mathcal{Z}^{n,q}_{\leqslant\lambda}+\dim\mathcal{Z}^{n,q+1}_{\leqslant\lambda},
\end{equation}
which completes the proof since the estimate for $\dim\mathcal{Z}^{n,q+1}_{\leqslant\lambda}$ is better than our desired estimate for $h^{n,q}_{\leqslant\lambda}$.
The claim is not complicated and was first proved in \cite{Ber02}. We present here for readers' convenience. Let $\alpha$ is an eigenform of $\Delta_{0}$, so that
\[
   \Delta_{0}\alpha=\mu\alpha.
\]
If we decompose $\alpha=\alpha^{1}+\alpha^{2}$ where $\alpha^{1}$ is $\bar{\partial}$-closed and $\alpha^{2}$ is orthogonal (with respect to $h_{0}$) to the space of $\bar{\partial}$-closed forms, then the $\alpha^{j}$'s with $j=1,2$ are also eigenforms with the same eigenvalue. To see this, note that $\Delta_{0}$ commutes with $\bar{\partial}$, so $\bar{\partial}\Delta_{0}\alpha_{1}=0$ and
\[
   <\Delta_{0}\alpha^{2},\eta>_{h_{0}}=<\alpha^{2},\Delta_{0}\eta>_{h_{0}}=0
\]
if $\bar{\partial}\eta=0$. Hence
\[
   \Delta_{0}\alpha^{j}=(\Delta_{0}\alpha)^{j}=\mu\alpha^{j}
\]
for $j=1,2$.
Now we decompose
\[
   \mathcal{H}^{n,q}_{\leqslant\lambda}=\mathcal{Z}^{n,q}_{\leqslant\lambda}\oplus(\mathcal{H}^{n,q}_{\leqslant\lambda}\ominus\mathcal{Z}^{n,q} _{\leqslant\lambda}).
\]
Since $\bar{\partial}$ maps $\mathcal{H}^{n,q}_{\leqslant\lambda}\ominus\mathcal{Z}^{n,q} _{\leqslant\lambda}$ injectively into $\mathcal{Z}^{n,q+1} _{\leqslant\lambda}$, (\ref{e11}) follows and the proof of Theorem \ref{t1} is complete.
\end{proof}
Since a semi-positive line bundle will always satisfy the condition in Theorem \ref{t1}, we remark here that the example (Proposition 4.2) in \cite{Ber02} also shows that the order of magnitude given in Theorem \ref{t1} can not be improved in general.

\section{Applications in geometry}
In order to prove Theorem \ref{t2}, we first extend the singular Hodge's theorem to the modified ideal sheaf. Since $\mathfrak{I}(L)\subset\mathcal{I}(L)$, there exits a natural morphism
\[
i_{n,q}:H^{n,q}(X,L^{k}\otimes\mathfrak{I}(L^{k}))\rightarrow H^{n,q}(X,L^{k}\otimes\mathscr{I}(L^{k})).
\]
Then the Hodge's theorem can be stated as follows.
\begin{proposition}[A (weak) singular version of Hodge's theorem, II]\label{p41}
Let $X$ be a compact complex manifold, and let $L$ be a nef line bundle on $X$. Let $\Delta_{0}$ be the Laplacian operator defined before. Assume that $h_{0}$ has analytic singularities. We have the following conclusions:

1. Suppose $i\Theta_{L,h_{0}}$ has at least $n-s+1$ positive eigenvalues at every point $x\in X$. Then
\[
   \mathcal{H}^{n,q}_{\leqslant0}(X,L^{k}\otimes\mathfrak{I}(L^{k}))=\mathrm{Im}\mathrm{i}_{n,q}
\]
for $q\geqslant s$ and $k$ large enough. 

2. Suppose $X$ is K\"{a}hler. Then 
\[
   \mathcal{H}^{n,q}_{\leqslant0}(X,L^{k}\otimes\mathfrak{I}(L^{k}))=\mathrm{Im}\mathrm{i}_{n,q}
\]
for all $q$ and $k$.
\end{proposition}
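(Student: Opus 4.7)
The inclusion $\mathcal{H}^{n,q}_{\leqslant 0}(X,L^{k}\otimes\mathfrak{I}(L^{k}))\subseteq\mathrm{Im}\,i_{n,q}$ is essentially tautological. An element $\alpha$ of the left-hand side is smooth (by the discussion preceding Proposition \ref{p21}, using $p=n$), is $\bar{\partial}$-closed, and satisfies $\int_{X}|\alpha|^{2}e^{-\psi}\omega_{n}<\infty$; so, under the identification $\mathfrak{I}(L^{k})=\mathscr{I}(\psi)$, it represents a class in $H^{n,q}(X,L^{k}\otimes\mathfrak{I}(L^{k}))$, whose image under $i_{n,q}$, transported through the isomorphism of Proposition \ref{p21}, is $\alpha$ itself.

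The reverse inclusion is the substantive content. Fix $c\in\mathrm{Im}\,i_{n,q}$ and pick a smooth $\bar{\partial}$-closed representative $\beta$ with $\int_{X}|\beta|^{2}e^{-\psi}\omega_{n}<\infty$. By Proposition \ref{p21}, the class $c$ has a unique $\Delta_{0}$-harmonic representative $\tilde{\alpha}=\lim_{\varepsilon\to 0}\alpha_{\varepsilon}$, where $\alpha_{\varepsilon}\in[\beta]$ is $\Delta_{\varepsilon}$-harmonic. The task is to prove $\int_{X}|\tilde{\alpha}|^{2}e^{-\psi}\omega_{n}<\infty$.

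My plan is to build a parallel family of approximants whose minimality is taken with respect to the weighted norm, and then to identify its limit with $\tilde{\alpha}$. Regularize $\psi$ by a decreasing family $\psi_{\delta}\searrow\psi$ of smooth quasi-plurisubharmonic functions, and for each pair $(\varepsilon,\delta)$ consider the smooth Hermitian metric $h_{\varepsilon}e^{-\psi_{\delta}/k}$ on $L$. Let $\alpha_{\varepsilon,\delta}\in[\beta]$ be the harmonic representative for the associated Laplacian. Hodge-theoretic minimality for this smooth metric yields
\[
\int_{X}|\alpha_{\varepsilon,\delta}|^{2}_{h^{k}_{\varepsilon}}e^{-\psi_{\delta}}\omega_{n}\leqslant\int_{X}|\beta|^{2}_{h^{k}_{\varepsilon}}e^{-\psi_{\delta}}\omega_{n},
\]
and the right-hand side is bounded uniformly in $(\varepsilon,\delta)$ by $\int_{X}|\beta|^{2}e^{-\psi}\omega_{n}<\infty$. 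A diagonal extraction combined with Fatou's lemma then produces a representative $\tilde{\alpha}^{\psi}\in[\beta]$ with $\int_{X}|\tilde{\alpha}^{\psi}|^{2}e^{-\psi}\omega_{n}<\infty$.

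The main obstacle is proving $\tilde{\alpha}=\tilde{\alpha}^{\psi}$. For Part 2, I would invoke the K\"{a}hler identities: for an $L^{k}$-valued $(n,q)$-form $\alpha$, harmonicity is equivalent to $\bar{\partial}\alpha=0$ together with $\ast\alpha$ being $\partial^{h}$-closed, and letting $\delta\to 0$ causes $\psi_{\delta}$ to drop out of the $\partial^{h}$-closedness on the regular locus of $\psi$. Thus $\tilde{\alpha}$ and $\tilde{\alpha}^{\psi}$ satisfy the same equations away from a pluripolar set, represent the same class in $H^{n,q}(X,L^{k}\otimes\mathcal{I}(L^{k}))$, and have finite $L^{2}(h_{0})$-norms, so the uniqueness part of Proposition \ref{p21} forces equality. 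For Part 1, the K\"{a}hler identities are unavailable, so I would replace them with Siu's $\partial\bar{\partial}$-Bochner formula (Proposition \ref{p27}): under the hypothesis that $i\Theta_{L,h_{0}}$ has $n-s+1$ strictly positive eigenvalues, the term $\langle i\Theta_{L^{k},h_{0}}\wedge\Lambda\alpha,\alpha\rangle$ dominates the error contributions for $q\geqslant s$ and $k$ large, producing the coercivity needed to force the same identification. The most delicate step throughout is verifying that the weak limits preserve $\bar{\partial}$-closedness and cohomology class membership in spite of the singular weight.
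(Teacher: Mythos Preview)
Your outline has a genuine gap at the identification $\tilde{\alpha}=\tilde{\alpha}^{\psi}$, and the heuristic you give for it is incorrect. The form $\alpha_{\varepsilon,\delta}$ is harmonic for the Laplacian built from the metric $h^{k}_{\varepsilon}e^{-\psi_{\delta}}$ on $L^{k}$; hence $\bar{\partial}^{\ast}_{h^{k}_{\varepsilon}e^{-\psi_{\delta}}}\alpha_{\varepsilon,\delta}=0$. As $\delta\to 0$ one has $\psi_{\delta}\to\psi$, not $\psi_{\delta}\to 0$, and $\psi$ is a nontrivial smooth function on the regular locus. Concretely, the Chern connection of $h^{k}_{0}e^{-\psi}$ differs from that of $h^{k}_{0}$ by $-\partial\psi\wedge$, so the limiting equation for $\tilde{\alpha}^{\psi}$ is $\partial^{h^{k}_{0}}(\ast\tilde{\alpha}^{\psi})=\partial\psi\wedge\ast\tilde{\alpha}^{\psi}$, not $\partial^{h^{k}_{0}}(\ast\tilde{\alpha}^{\psi})=0$. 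Thus $\tilde{\alpha}^{\psi}$ is harmonic for a \emph{different} Laplacian than $\tilde{\alpha}$, and the uniqueness clause of Proposition~\ref{p21} does not apply. Your plan for Part~1 is vaguer still: Proposition~\ref{p27} is a pointwise differential inequality for $T_{\alpha}$, not a device that pins down harmonic representatives, so invoking it for ``coercivity'' does not yield the identification either.

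The paper avoids this problem entirely: it never constructs a second family $\tilde{\alpha}^{\psi}$. Instead it works directly with $\tilde{\alpha}$ and proves (Lemma~\ref{l41}) the pointwise bound $|\tilde{\alpha}|^{2}_{h_{0}}\leqslant|\alpha|^{2}_{h_{0}}$ near the pole set, from which $\psi$-integrability of $\tilde{\alpha}$ follows at once. The key tool is Takegoshi's twisted Bochner--Kodaira--Nakano identity with weight $\eta=e^{\phi}$ for a carefully chosen quasi-plurisubharmonic $\phi$ whose sublevel set $V=\{\phi<0\}$ contains the poles: combining it with the companion identity (\ref{e22}) one extracts $(\bar{\partial}\phi\wedge)^{\ast}\tilde{\alpha}=0$, and then a Stokes argument on $V$ shows that $\tilde{\alpha}$ minimizes the $h_{0}$-norm among representatives \emph{on $V$}. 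Shrinking $V$ gives the pointwise control. In the non-K\"{a}hler case the paper replaces (\ref{e21}) by Demailly's Bochner formula with torsion term $T_{\omega}$; the hypothesis on the positive eigenvalues of $i\Theta_{L,h_{0}}$, together with $q\geqslant s$ and $k$ large, is exactly what makes $[i\Theta_{L^{k},h_{0}},\Lambda]+T_{\omega}$ positive so that the same argument runs.
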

We need to prove a lemma first.
\begin{lemma}\label{l41}
Assume that $i\Theta_{L,h_{0}}$ has at least $n-s+1$ positive eigenvalues at every point $x\in X$. If $\alpha\in H^{n,q}(X,L^{k}\otimes\mathfrak{I}(L))$ with $q\geqslant s$, then its $\Delta_{0}$-harmonic representative $\tilde{\alpha}$ satisfies that
\[
|\tilde{\alpha}|^{2}_{h_{0}}\leqslant|\alpha|^{2}_{h_{0}}\textrm{ near the poles}
\]
when $k\geqslant k_{0}$ for some $k_{0}$ large enough. In particular, if $X$ is K\"{a}hler, the assumption for $i\Theta_{L,h_{0}}$ is not needed and $k_{0}=1$.
\begin{proof}
Let $\varphi_{0}$ be the weight function of $h_{0}$, and let $Z$ be the pole-set of $\varphi_{0}$. We claim that there exits an open subset $V:=\{\phi<0\}$ defined by some quasi-plurisubharmonic function $\phi$ on $X$ with $Z\subset V$, satisfying that
\[
\int_{V}|\tilde{\alpha}|^{2}_{h_{0}}\leqslant\int_{V}|\alpha|^{2}_{h_{0}}.
\]
It leads to the conclusion. In fact, if $|\tilde{\alpha}|^{2}_{h_{0}}$ is bigger than $|\alpha|^{2}_{h_{0}}$ at some point $x\in V$, $|\tilde{\alpha}|^{2}_{h_{0}}>|\alpha|^{2}_{h_{0}}$ on an open subset of $V$ since $|\alpha|^{2}_{h_{0}}$ is uniformly bounded. Moreover, we can shrink $V$ by substituting $\phi+C$ for $\phi$. It is easy to get a contradiction to the integral inequality above when $V$ is small enough.

Now it remains to prove the claim. First we assume that $X$ is K\"{a}hler, and the proof is divided into four steps.

1. Let's recall two formulas. The first one is the generalized Kodaira--Akizuki--Nakano formula proved in \cite{Tak95}. Let $\psi$ be a smooth real-valued function on $X$, and let $h$ be a smooth metric on $L$. Then we have
\begin{equation}\label{e21}
\begin{split}
&\|\sqrt{\eta}(\bar{\partial}+\bar{\partial}\psi\wedge)\alpha\|^{2}_{h}+\|\sqrt{\eta}\bar{\partial}^{\ast}\alpha\|^{2}_{h}\\
=&\|\sqrt{\eta}(\partial^{h}-\partial\psi\wedge)^{\ast}\alpha\|^{2}_{h}+\|\sqrt{\eta}\partial^{h}\alpha\|^{2}_{h}+(i\eta[\Theta_{L,h}+\partial\bar {\partial}\psi,\Lambda]\alpha,\alpha)_{h}
\end{split}
\end{equation}
for any $\alpha\in A^{n,q}(X,L)$ and $\eta=e^{\psi}$.

The second formula can also be found in \cite{Tak95}:
\begin{equation}\label{e22}
\begin{split}
\|\sqrt{\eta}(\partial\psi\wedge)^{\ast}\alpha\|^{2}_{h}=\|\sqrt{\eta}\bar{\partial}\psi\wedge\alpha\|^{2}_{h}+\|\sqrt{\eta}(\bar{\partial}\psi\wedge) ^{\ast}\alpha\|^{2}_{h}.
\end{split}
\end{equation}

Now apply (\ref{e21}) with $\psi=1$ and $h=h_{\varepsilon}$, we have
\[
\begin{split}
\|\bar{\partial}^{\ast}\tilde{\alpha}\|^{2}_{h_{\varepsilon}}=\|(\partial^{h_{\varepsilon}})^{\ast}\tilde{\alpha}\|^{2}_{h_{\varepsilon}}+\|\partial^ {h_{\varepsilon}}\tilde{\alpha}\|^{2}_{h_{\varepsilon}}+(i[\Theta_{L,h_{\varepsilon}},\Lambda]\tilde{\alpha},\tilde{\alpha})_{h_{\varepsilon}}.
\end{split}
\]
Take the limit with respect to $\varepsilon$,
\[
\lim\|\bar{\partial}^{\ast}\tilde{\alpha}\|^{2}_{h_{\varepsilon}}=\lim\|\bar{\partial}^{\ast}\alpha_{\varepsilon}\|^{2}_{h_{\varepsilon}}=0,
\]
where $\alpha_{\varepsilon}$ is the harmonic representative of $\alpha$ with respect to $h_{\varepsilon}$. Moreover, since $i\Theta_{L,h_{0}}\geqslant0$, we have
\[
\lim\|(\partial^{h_{\varepsilon}})^{\ast}\tilde{\alpha}\|^{2}_{h_{\varepsilon}}=\lim\|\partial^ {h_{\varepsilon}}\tilde{\alpha}\|^{2}_{h_{\varepsilon}}=\lim(i[\Theta_{L,h_{\varepsilon}},\Lambda]\tilde{\alpha},\tilde{\alpha})_{h_{\varepsilon}}=0.
\]
2. We fix a smooth metric $\varphi_{\varepsilon}$, and take $\varphi_{\textrm{min},\varepsilon}$ be the metric with minimal singularity. Namely,
\[
i\Theta_{L,\varphi_{\varepsilon}}+i\partial\bar{\partial}\varphi_{\textrm{min},\varepsilon}\geqslant0\textrm{ and }\varphi_{\textrm{min},\varepsilon}\preceq\varphi_{0}.
\]
The definition for the singular metric with minimal singularity can be found in Sect.2.5. Notice that although the weight function $\varphi_{\textrm{min},\varepsilon}$ will depend on $\varepsilon$, the curvature current $i\Theta_{L,\varphi_{\textrm{min}}}:=i\Theta_{L,\varphi_{\varepsilon}}+i\partial\bar{\partial}\varphi_{\textrm{min},\varepsilon}$ is independent of $\varepsilon$ and positive. Take the limit with respect to $\varepsilon$, we finally get
\[
i\Theta_{L,\varphi_{\textrm{min}}}=i\Theta_{L,\varphi_{0}}+i\partial\bar{\partial}\phi_{1}
\]
for some $\phi_{1}$. Since $\varphi_{\textrm{min},\varepsilon}\preceq\varphi_{0}$, the pole-set of $\varphi_{0}$ is included in
\[
V_{1}:=\{-\phi_{1}<0\}.
\]
There is one issue here. If $\varphi_{\textrm{min},\varepsilon}\sim\varphi_{0}$, we should replace $\varphi_{\textrm{min}}$ by another singular metric $\sigma$. Namely, we consider a singular metric $\sigma$ such that $i\Theta_{L,\sigma}\geqslant0$ and $\sigma\succeq\varphi_{0}$ strictly. Accordingly, we will get
\[
i\Theta_{L,\sigma}=i\Theta_{L,\varphi_{0}}+i\partial\bar{\partial}\phi_{2}.
\]
At this time, the pole-set of $\varphi_{0}$ is included in $V_{2}:=\{\phi_{2}<0\}$.

3. Apply (\ref{e21}) again with $\psi=\phi_{i}$ for $i=1,2$ respectively. Take the limit with respect to $\varepsilon$, we get that
\[
\begin{split}
\|\sqrt{\eta}(\bar{\partial}\phi_{i}\wedge\tilde{\alpha})\|^{2}_{h_{0}}=\|\sqrt{\eta}(-\partial\phi_{i}\wedge)^{\ast}\tilde{\alpha}\|^{2}_{h_{0}}+ (i\eta[\Theta_{L,\varphi_{0}}+\partial\bar{\partial}\phi_{i},\Lambda]\tilde{\alpha}, \tilde{\alpha})_{h_{0}}.
\end{split}
\]
One may wonder whether (\ref{e21}) is suitable for a singular weight function $\phi_{i}$ here. Indeed, by \cite{DPS01} one can always approximate $\phi_{i}$ by a family of smooth metrics $\{\phi_{\delta}\}$. Apply (\ref{e21}) to $\phi_{\delta}$ and take the limit with respect to $\delta$, we will finally get our desired equality. Later, when applying formula (\ref{e22}) on a singular weight function, we use this technique again without pointing out.

Combine with (\ref{e22}), we obtain that
\[
(\bar{\partial}\phi_{i}\wedge)^{\ast}\tilde{\alpha}=0.
\]

4. Observe that if we restrict the arbitrary metrics $\omega,h$ on $X,L$ to an open subset $U=\{\chi<0\}$, and define the corresponding $L^{2}$-norm on $U$, we have
\begin{equation}\label{e23}
(\bar{\partial}\beta,\gamma)=(\beta,\bar{\partial}^{\ast}\gamma)+[\beta,(\bar{\partial}\chi\wedge)^{\ast}\gamma].
\end{equation}
for any smooth $\alpha,\beta$. Setting $\tau:=dS/|d\chi|$ and $[\alpha,(\bar{\partial}\chi\wedge)^{\ast}\beta]$ is defined as
\[
[\alpha,(\bar{\partial}\chi\wedge)^{\ast}\beta]:=\int_{\partial U}(\alpha,(\bar{\partial}\chi\wedge)^{\ast}\beta)\tau.
\]
In particular, if $(\bar{\partial}\chi\wedge)^{\ast}\beta=0$ for any $\beta$, we have
\[
(\bar{\partial}\alpha,\beta)=(\alpha,\bar{\partial}^{\ast}\beta).
\]
Apply (\ref{e23}) with $h=h_{0}$, $\chi=\phi_{i}$ and $\gamma=\tilde{\alpha}$, we finally get that
\[
0=\lim_{\varepsilon\rightarrow0}\int_{V_{i}}(\bar{\partial}^{\ast}\tilde{\alpha},\beta)_{h_{\varepsilon}}=\int_{V_{i}}(\tilde{\alpha},\bar{\partial} \beta)_{h_{0}}
\]
for any $\beta$. We are ready to prove the inequality in the claim. Consider $\tilde{\alpha}+\bar{\partial}\beta$ for any $\beta$, we have
\[
\begin{split}
\int_{V_{i}}\|\tilde{\alpha}+\bar{\partial}\beta\|^{2}_{h_{0}}&=\int_{V_{i}}\|\tilde{\alpha}\|^{2}_{h_{0}}+\int_{V_{i}}\|\bar{\partial}\beta\|^{2}_ {h_{0}}+ 2\textrm{Re}\int_{V_{i}}(\tilde{\alpha},\bar{\partial}\beta)_{h_{0}}\\
&=\int_{V_{i}}\|\tilde{\alpha}\|^{2}_{h_{0}}+\int_{V_{i}}\|\bar{\partial}\beta\|^{2}_{h_{0}}.
\end{split}
\]
It means $\tilde{\alpha}$ minimizes the $L^{2}$-norm on $V_{i}$, hence the desired inequality. The proof of the claim is finished.

Now we deal with the second situation. Since $(X,\omega)$ is not necessary to be K\"{a}hler, we will use the Kodaira--Akizuki--Nakano formula for non-K\"{a}hler manifold proved in \cite{Dem85}.

Let $\tau$ be the operator of type $(1,0)$ defined by $\tau=[\Lambda,\partial\omega]$, and let
\[
\Delta_{\partial,\tau}=(\partial^{h}+\tau)(\partial^{h}+\tau)^{\ast}+(\partial^{h}+\tau)^{\ast}(\partial^{h}+\tau)
\]
be the $\partial$-Laplaican twisted by $\tau$. Then we have
\[
\Delta_{\bar{\partial}}=\Delta_{\partial,\tau}+[i\Theta_{L,h},\Lambda]+T_{\omega}.
\]
Here $T_{\omega}$ is an operator of order $0$ depending only on the torsion of the Hermitian metric $\omega$:
\[
T_{\omega}=[\Lambda,[\Lambda,\frac{i}{2}\partial\bar{\partial}\omega]]-[\partial\omega,(\partial\omega)^{\ast}].
\]

Use this formula to replace formula (\ref{e21}), then the same argument as before will lead to the desired inequality after we have shown that $[i\Theta_{L,h_{0}},\Lambda]+T_{\omega}$ is a positive operator.

So it is left to prove that $[i\Theta_{L,h_{0}},\Lambda]+T_{\omega}$ is positive. Since $i\Theta_{L,h_{0}}$ has at least $n-s+1$ positive eigenvalues, the computation in Theorem 5.1 of \cite{Dem09} shows that there exits a Hermitian metric $\omega_{\varepsilon}$ such that
\[
([i\Theta_{L,h},\Lambda_{\omega_{\varepsilon}}]\alpha,\alpha)_{\omega_{\varepsilon}}\geqslant(q-s+1-\varepsilon(s-1))|\alpha|^{2}
\]
for any $L$-valued $(n,q)$-form $\alpha$. Choosing $\varepsilon=1/s$ and $q\geqslant s$, the right hand side will be $\geqslant(1/s)|\alpha|^{2}$. Take $k$ large enough such that $[i\Theta_{L^{k},h_{0}},\Lambda]+T_{\omega}$ is positive, and the proof is complete.
\end{proof}
\end{lemma}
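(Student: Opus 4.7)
The plan is to reduce the pointwise inequality to an integral minimization on a small neighborhood of the pole-set of $\varphi_0$, and then exploit a twisted Bochner argument to show that $\tilde{\alpha}$ realizes that minimum. First I would choose an open neighborhood $V=\{\phi<0\}$ of the pole-set $Z$ of $\varphi_0$, for a quasi-plurisubharmonic function $\phi$ to be selected below. Since $\alpha$ lies in the cohomology of the modified ideal sheaf, Proposition \ref{p22}(2) shows that $|\alpha|^2_{h_0}$ is a bounded continuous function on $X$ (in fact vanishing on $Z$). Hence if the pointwise inequality $|\tilde{\alpha}|^2_{h_0}\leqslant |\alpha|^2_{h_0}$ failed at some $x_0\in V$, it would fail on an open neighborhood of $x_0$, and then by shrinking $V$ (for instance by replacing $\phi$ with $\phi+C$ for some $C<0$) one would contradict the integral inequality $\int_V |\tilde{\alpha}|^2_{h_0}\leqslant\int_V |\alpha|^2_{h_0}$. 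Everything therefore reduces to establishing this integral inequality on a suitably small $V$.

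To get the integral inequality it is enough to show that $\tilde{\alpha}$ minimizes $\int_V |\cdot|^2_{h_0}$ within its Dolbeault class on $V$. Expanding $\|\tilde{\alpha}+\bar{\partial}\beta\|^2_{h_0,V}$ and using that the cross term is linear in $\beta$, this minimizing property reduces to the orthogonality $\int_V(\tilde{\alpha},\bar{\partial}\beta)_{h_0}=0$ for every smooth $\beta$ on $V$. Integration by parts on $V$, whose boundary is $\{\phi=0\}$, reduces this orthogonality to two conditions: (i) $\bar{\partial}^*\tilde{\alpha}=0$ in the interior of $V$, which follows by taking the $L^2$-limit $\varepsilon\to0$ of $\bar{\partial}^*_\varepsilon\alpha_\varepsilon=0$ for the $\Delta_\varepsilon$-harmonic representatives $\alpha_\varepsilon$ converging to $\tilde{\alpha}$; and (ii) vanishing of the surface boundary term, for which it suffices to prove $(\bar{\partial}\phi\wedge)^*\tilde{\alpha}=0$ along $\partial V$.

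The crux is thus the construction of $\phi$ so as to force (ii). I would take $\phi$ coming from the comparison $i\Theta_{L,\varphi_{\min}}=i\Theta_{L,\varphi_0}+i\partial\bar{\partial}\phi$ with the minimal-singularity metric $\varphi_{\min}\preceq\varphi_0$ (replacing $\varphi_{\min}$ by a strictly more singular auxiliary metric in the degenerate case $\varphi_{\min}\sim\varphi_0$). By construction, $i\Theta_{L,\varphi_0}+i\partial\bar{\partial}\phi\geqslant 0$ as a current and the pole-set of $\varphi_0$ is contained in $\{\phi<0\}$. Then I would apply Takegoshi's twisted Kodaira--Akizuki--Nakano identity from \cite{Tak95} to $\tilde{\alpha}$ with twisting weight $\psi=\phi$ and metric $h_\varepsilon$. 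In the limit $\varepsilon\to0$ the $\bar{\partial}$- and $\bar{\partial}^*$-terms drop out because $\tilde{\alpha}$ is $\Delta_0$-harmonic, while the curvature-plus-Hessian contribution is nonnegative by the choice of $\phi$; combined with Takegoshi's companion identity relating $\|(\partial\phi\wedge)^*\tilde{\alpha}\|^2_{h_0}$ to $\|\bar{\partial}\phi\wedge\tilde{\alpha}\|^2_{h_0}$ and $\|(\bar{\partial}\phi\wedge)^*\tilde{\alpha}\|^2_{h_0}$, these contributions are each forced to vanish separately, yielding $(\bar{\partial}\phi\wedge)^*\tilde{\alpha}=0$. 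The singularity of $\phi$ is handled by smooth approximation in the spirit of \cite{DPS01} and passage to the limit.

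The main obstacle lies in the non-K\"ahler case. There the twisted Bochner identity picks up an order-zero torsion operator $T_\omega$ depending on $d\omega$, and the argument closes only when $[i\Theta_{L^k,h_0},\Lambda]+T_\omega$ is nonnegative on $(n,q)$-forms. This is where the eigenvalue hypothesis enters: by Theorem 5.1 of \cite{Dem09}, if $i\Theta_{L,h_0}$ has at least $n-s+1$ positive eigenvalues, then after a suitable perturbation of $\omega$ one has a uniform lower bound $([i\Theta_{L,h_0},\Lambda]\alpha,\alpha)\geqslant c|\alpha|^2$ with $c>0$ on $(n,q)$-forms for $q\geqslant s$. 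Passing from $L$ to $L^k$ scales this bound by $k$, so for $k\geqslant k_0$ sufficiently large the curvature term dominates the bounded torsion $T_\omega$ and the argument goes through. In the K\"ahler case $T_\omega=0$, no absorption is needed, the eigenvalue hypothesis becomes unnecessary, and $k_0=1$ suffices.
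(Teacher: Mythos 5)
Your proposal is correct and follows essentially the same route as the paper's own proof: the same reduction of the pointwise bound to an $L^{2}$-minimization on a neighborhood $\{\phi<0\}$ of the pole set, with $\phi$ produced by comparing $\varphi_{0}$ to the minimal-singularity metric, the same use of Takegoshi's twisted identities to force $(\bar{\partial}\phi\wedge)^{\ast}\tilde{\alpha}=0$ and hence kill the boundary term in the Stokes formula, and the same absorption of the torsion operator $T_{\omega}$ by the curvature of $L^{k}$ (via Demailly's Theorem 5.1) in the non-K\"ahler case. No substantive differences to report.
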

Next we prove Proposition \ref{p41}
\begin{proof}[Proof od Proposition \ref{p41}]
Let
\[
L^{n,q}_{(2)}(X,L)_{\varphi}
\]
be the space of all the $L$-valued $(n,q)$-form that is $L^{2}$-bounded against $\varphi$. $L^{n,q}_{(2)}(X,L)_{\psi}$ is similar. Let
\[
\textrm{Im}\bar{\partial}_{1}:=\textrm{Im}(\bar{\partial}:L^{n,q}_{(2)}(X,L)_{\varphi}\rightarrow L^{n,q}_{(2)}(X,L)_{\varphi}),
\]
and
\[
\textrm{Im}\bar{\partial}_{2}:=\textrm{Im}(\bar{\partial}:L^{n,q}_{(2)}(X,L)_{\psi}\rightarrow L^{n,q}_{(2)}(X,L)_{\psi}).
\]
$\textrm{Ker}\bar{\partial}_{1}$ and $\textrm{Ker}\bar{\partial}_{2}$ are defined similarly. Recall that there are following orthogonal decompositions \cite{GHS98}:
\[
\textrm{Ker}\bar{\partial}_{1}=\textrm{Im}\bar{\partial}_{1}\bigoplus\mathcal{H}^{n,q}_{\leqslant0}(X,L\otimes\mathscr{I}(L),\Delta_{0})
\]
and
\[
\textrm{Ker}\bar{\partial}_{2}=\textrm{Im}\bar{\partial}_{2}\bigoplus(\textrm{Ker}\bar{\partial}_{2}\cap\textrm{Ker}\bar{\partial}^{\ast}_{\psi}).
\]
By $\bar{\partial}^{\ast}_{\psi}$ we refer to the formal adjoint operator of $\bar{\partial}$ with respect to the $L^{2}$-norm defined by $\varphi$. One may wonder that are these decompositions still valid for singular metrics. Indeed, we can approximate them by smooth metrics then take the limit. On the other hand, it is easy to verify that
\begin{equation}\label{e24}
\textrm{Ker}\bar{\partial}_{1}\cap L^{n,q}_{(2)}(X,L)_{\psi}=\textrm{Ker}\bar{\partial}_{2}.
\end{equation}

Now the cohomology group can be expressed as
\[
H^{n,q}(X,L\otimes\mathscr{I}(\varphi))\simeq\frac{\textrm{Ker}\bar{\partial}_{1}}{\textrm{Im}\bar{\partial}_{1}}= \mathcal{H}^{n,q}_{\leqslant0}(X,L\otimes\mathscr{I}(L),\Delta_{0}))
\]
and
\[
H^{n,q}(X,L\otimes\mathfrak{I}(\varphi))\simeq\frac{\textrm{Ker}\bar{\partial}_{2}}{\textrm{Im}\bar{\partial}_{2}}= \textrm{Ker}\bar{\partial}_{2}\cap\textrm{Ker}\bar{\partial}^{\ast}_{\psi}.
\]
Therefore the morphism $i_{n,q}$ can be rewritten as
\[
i_{n,q}:\textrm{Ker}\bar{\partial}_{2}\cap\textrm{Ker}\bar{\partial}^{\ast}_{\psi}\rightarrow\mathcal{H}^{n,q}_{\leqslant0}(X,L\otimes \mathscr{I}(L),\Delta_{0})),
\]
hence its image equals
\[
\begin{split}
&\frac{\textrm{Ker}\bar{\partial}_{2}\cap\textrm{Ker}\bar{\partial}^{\ast}_{\psi}}{\textrm{Im}\bar{\partial}_{1}}\\
=&\frac{\textrm{Ker}\bar{\partial}_{2}/\textrm{Im}\bar{\partial}_{2}}{\textrm{Im}\bar{\partial}_{1}}=\frac{\textrm{Ker}\bar{\partial}_{2}} {\textrm{Im}\bar{\partial}_{1}}\\
=&\frac{\textrm{Ker}\bar{\partial}_{1}\cap L^{n,q}_{(2)}(X,L)_{\psi}}{\textrm{Im}\bar{\partial}_{1}}\\
=&\mathcal{H}^{n,q}_{\leqslant0}(X,L\otimes\mathscr{I}(L),\Delta_{0}))\cap L^{n,q}_{(2)}(X,L)_{\psi}.
\end{split}
\]
We use the fact that
\[
\textrm{Im}\bar{\partial}_{2}\subset\textrm{Im}\bar{\partial}_{1}
\]
to get the second equality. The third equality comes from formula (\ref{e24}) and the last equality is due to Lemma \ref{l41}. The proof is finished.
\end{proof}
\begin{problem}\label{q21}
We are willing to know whether we have
\[
\mathcal{H}^{n,q}_{\leqslant0}(X,L\otimes\mathfrak{I}(L))\simeq H^{n,q}(X,L\otimes\mathfrak{I}(L)).
\]
\end{problem}

We are ready to prove Theorem \ref{t2} now.
\begin{proof}[Proof of Theorem \ref{t2}]
Apply Theorem \ref{t1} with $\lambda=0$, we then have
\[
h^{n,q}_{\leqslant0}(L^{k}\otimes E\otimes\mathfrak{I}(L^{k}))\leqslant Ck^{n-q}.
\]
Then the conclusion follows from Proposition \ref{p41} after we substitute $E\otimes K^{-1}_{X}$ for $E$.
\end{proof}

We shall list some applications of Theorem \ref{t2}. The first application will be on the extension problem of the holomorphic sections. In fact, we can prove a more general version of the Grauert--Riemenschneider conjecture.
\begin{proof}[Proof of Theorem \ref{t3}]
The first case is simple, so we omit it here.

In the second situation, we have
\[
h^{0,q}(L^{k}\otimes\mathfrak{I}(L^{k}))\leqslant Ck^{n-q}
\]
for $q>1$ when $i_{0,q}$ is injective. We use the Riemann--Roch formula involving the ideal sheaf. Then we have
\[
\begin{split}
   h^{0}(L^{k}\otimes\mathfrak{I}(L^{k}))&=\chi(X,L^{k}\otimes\mathfrak{I}(L^{k}))+h^{1}(L^{k}\otimes\mathfrak{I}(L^{k}))+O(k^{n-2})\\
               &=\chi(X,L^{k})-\chi(V,L^{k})+h^{1}(L^{k}\otimes\mathfrak{I}(L^{k}))+O(k^{n-2})\\
               &=\frac{k^{n}(L)^{n}}{n!}-\frac{k^{l}(L)^{l}}{l!}+h^{1}(L^{k}\otimes\mathfrak{I}(L^{k}))+O(k^{n-2}).
\end{split}
\]
Here $V=V(\mathfrak{I}(L^{k}))$ and $l=\dim V$. Notice that
\[
H^{0}(X,L^{k}\otimes\mathfrak{I}(L^{k}))\subset H^{0}(X,L^{k}),
\]
$L$ is big if and only if $(L)^{n}>0$. The surjectivity case is similar.

The same argument applies in the third situation, and the proof is complete.
\end{proof}

We prove a vanishing theorem to finish this section.

\begin{proof}[Proof of Theorem \ref{t4}]
Firstly, we claim that if $\mathcal{H}^{n,q}_{\leqslant0}(X,L\otimes\mathfrak{I}(L))$ is non-zero,
\[
h^{0}(X,L^{k-1}\otimes\mathfrak{I}(L^{k-1}))\leqslant\dim\mathcal{H}^{n,q}_{\leqslant0}(X,L^{k}\otimes\mathfrak{I}(L^{k})).
\]
In fact, let $\{s_{j}\}$ be a basis of $H^{0}(X,L^{k-1}\otimes\mathfrak{I}(L^{k-1}))$. Then for any $\alpha\in \mathcal{H}^{n,q}_{\leqslant0}(X,L\otimes\mathfrak{I}(L))$, $\{s_{j}\alpha\}$ is linearly independent in $\mathcal{H}^{n,q}_{\leqslant0}(X,L^{k}\otimes\mathfrak{I}(L^{k}))$ by Proposition \ref{p25}. It leads to the inequality.

Now suppose that $\mathcal{H}^{n,q}_{\leqslant0}(X,L\otimes\mathfrak{I}(L))$ is non-zero for $q>n-\kappa(L)$. We have
\[
  h^{0}(X,L^{k-1})=h^{0}(X,L^{k-1}\otimes\mathfrak{I}(L^{k-1}))\leqslant\dim\mathcal{H}^{n,q}_{\leqslant0}(X,L^{k}\otimes\mathfrak{I}(L^{k})).
\]
The first equality comes from the assumption that $h_{0}$ provides an analytic Zariski decomposition, and the second inequality is due to the claim. By the definition of Iitaka dimension $\kappa(L)$, we have
\[
  \limsup_{k\rightarrow\infty}\frac{h^{0}(X,L^{k-1})}{(k-1)^{\kappa(L)}}>0.
\]
It means that
\[
  \limsup_{k\rightarrow\infty}\frac{\dim\mathcal{H}^{n,q}_{\leqslant0}(X,L^{k}\otimes\mathfrak{I}(L^{k}))}{(k-1)^{\kappa(L)}}>0.
\]
On the other hand, we have
\[
  \dim\mathcal{H}^{n,q}_{\leqslant0}(X,L^{k}\otimes\mathfrak{I}(L^{k}))\leqslant Ck^{n-q}
\]
by Theorem \ref{t2}, so $n-q\geqslant\kappa(L)$. It contradicts to the assumption that $q>n-\kappa(L)$. Hence
\[
\mathcal{H}^{n,q}_{\leqslant0}(X,L\otimes\mathfrak{I}(L))=\mathrm{Im}\mathrm{i}_{n,q}=0
\]
for $q>n-\kappa(L)$. Consider the cohomology long exact sequence of the short exact sequence
\[
0\rightarrow\mathfrak{I}(L)\rightarrow\mathscr{I}(L)\rightarrow\mathfrak{I}(L)/\mathscr{I}(L)\rightarrow0.
\]
Notice that $\textrm{supp}(\mathfrak{I}(L)/\mathscr{I}(L))=\{h_{0}=\infty\}$, the conclusion follows immediately from the fact that $\mathrm{Im}\mathrm{i}_{n,q}=0$ and $H^{q}(X,\mathfrak{I}(L)/\mathscr{I}(L))=0$ when $q>m$.
\end{proof}

\section{The pseudo-effective case}
In this section, we will discuss the situation that $L$ is merely pseudo-effective.
\subsection{The harmonic forms}
As we have shown before, the ingredient to define a Laplacian operator as well as the associated eigenform for a singular metric $\phi$ is to approximate it by a family of smooth metrics $\{\phi_{\varepsilon}\}$. The difference for a pseudo-effective line bundle is that we can do such an approximation, only on an open subvariety $Y\subset X$. However, it seems to be enough, at least to define the harmonic $L$-valued $(n,q)$-forms.

Now let $(L,\phi)$ be a pseudo-effective line bundle on a compact complex manifold $X$. Assume that there exits a holomorphic section $s$ of $L^{k_{0}}$ for some integer $k_{0}$, such that $\sup_{X}|s|_{k_{0}\phi}<\infty$. Fix a Hermitian metric $\omega$ on $X$. Then by Demailly's approximation \cite{DPS01}, we can find a family of metrics $\{\phi_{\varepsilon}\}$ on $L$ with the following properties:

(a) $\phi_{\varepsilon}$ is smooth on $X-Z_{\varepsilon}$ for a subvariety $Z_{\varepsilon}$;

(b) $\phi\leqslant\phi_{\varepsilon_{1}}\leqslant\phi_{\varepsilon_{2}}$ holds for any $0<\varepsilon_{1}\leqslant\varepsilon_{2}$;

(c) $\mathscr{I}(\phi)=\mathscr{I}(\phi_{\varepsilon})$; and

(d) $i\Theta_{L,\phi_{\varepsilon}}\geqslant-\varepsilon\omega$.

Thanks to the proof of the openness conjecture by Berndtsson \cite{Ber15}, one can arrange $\phi_{\varepsilon}$ with logarithmic poles along $Z_{\varepsilon}$ according to the remark in \cite{DPS01}. Moreover, since the norm $|s|_{k_{0}\phi}$ is bounded on $X$, the set $\{x\in X|\nu(\phi_{\varepsilon},x)>0\}$ for every $\varepsilon>0$ is contained in the subvariety $Z:=\{x|s(x)=0\}$ by property (b). Here $\nu(\phi_{\varepsilon},x)$ refers to the Lelong number of $\phi_{\varepsilon}$ at $x$. Hence, instead of (a), we can assume that

(a') $\phi_{\varepsilon}$ is smooth on $X-Z$, where $Z$ is a subvariety of $X$ independent of $\varepsilon$.

Now let $Y=X-Z$. We use the method in \cite{Dem82} to construct a complete Hermitian metric on $Y$ as follows. Since $Y$ is weakly pseudo-convex, we can take a smooth plurisubharmonic exhaustion function $\psi$ on $X$. Define $\tilde{\omega}=\omega+\frac{1}{l}i\partial\bar{\partial}\psi^{2}$ for $l\gg0$. It is easy to verify that $\tilde{\omega}$ is a complete Hermitian metric on $Y$ and $\tilde{\omega}\geqslant\omega$.

Let $L^{n,q}_{(2)}(Y,L)_{\phi_{\varepsilon},\tilde{\omega}}$ be the $L^{2}$-space of $L$-valued $(n,q)$-forms $u$ on $Y$ with respect to the inner product given by $\phi_{\varepsilon},\tilde{\omega}$. Then we have the orthogonal decomposition
\begin{equation}\label{e13}
L^{n,q}_{(2)}(Y,L)_{\phi_{\varepsilon},\tilde{\omega}}=\mathrm{Im}\bar{\partial}\bigoplus\mathcal{H}^{n,q}_{\phi_{\varepsilon}, \tilde{\omega}}(L)\bigoplus\mathrm{Im}\bar{\partial}^{\ast}_{\phi_{\varepsilon}}
\end{equation}
where
\[
  \mathcal{H}^{n,q}_{\phi_{\varepsilon}, \tilde{\omega}}(L)=\{\alpha|\bar{\partial}\alpha=0, \bar{\partial}^{\ast}_{\phi_{\varepsilon}}\alpha=0\}.
\]
We give a brief explanation for decomposition (\ref{e13}). Usually $\mathrm{Im}\bar{\partial}$ is not closed in the $L^{2}$-space of a noncompact manifold even if the metric is complete. However, in the situation we consider here, $Y$ has the compactification $X$, and the forms on $Y$ are bounded in $L^{2}$-norms. Such a form will have good extension properties. Therefore the set $L^{n,q}_{(2)}(Y,L)_{\phi_{\varepsilon},\tilde{\omega}}\cap\mathrm{Im}\bar{\partial}$ behaves much like the space $\mathrm{Im}\bar{\partial}$ on $X$, which is surely closed. The complete explanation can be found in \cite{Fuj12,Wu17}.

Now we have all the ingredients for the definition of $\Delta_{\phi}$-harmonic forms. We denote the Lapalcian operator on $Y$ associated to $\tilde{\omega}$ and $\phi_{\varepsilon}$ by $\Delta_{\varepsilon}$.
\begin{definition}\label{d51}
Let $\alpha$ be an $L$-valued $(n,q)$-form on $X$ with bounded $L^{2}$-norm with respect to $\omega,\phi$. Assume that for every $\varepsilon\ll1$, there exists a Dolbeault cohomological equivalent class $\alpha_{\varepsilon}\in[\alpha|_{Y}]$ such that
\begin{enumerate}
  \item $\Delta_{\varepsilon}\alpha_{\varepsilon}=0$ on $Y$;
  \item $\alpha_{\varepsilon}\rightarrow\alpha|_{Y}$ in $L^{2}$-norm.
\end{enumerate}
Then we call $\alpha$ a $\Delta_{\phi}$-harmonic form. The space of all the $\Delta_{\phi}$-harmonic forms is denoted by
\[
\mathcal{H}^{n,q}_{\leqslant0}(X,L\otimes\mathscr{I}(\phi),\Delta_{\phi}).
\]
\end{definition}

We will show that Definition \ref{d51} is compatible with the usual definition of $\Delta_{\phi}$-harmonic forms for a smooth $\phi$ by proving the following Hodge-type isomorphism. Notice that here we furthermore assume that $(X,\omega)$ is K\"{a}hler.
\begin{proposition}[A singular version of Hodge's theorem, III]\label{p51}
Let $(X,\omega)$ be a compact K\"{a}hler manifold. $(L,\phi)$ is a pseudo-effective line bundle on $X$. Assume that there exists a section $s$ of some multiple $L^{k}$ such that $\sup_{X}|s|_{k\phi}<\infty$. Then the following isomorphism holds:
\begin{equation}\label{e14}
   \mathcal{H}^{n,q}_{\leqslant0}(X,L\otimes\mathscr{I}(\phi),\Delta_{\phi})\simeq H^{n,q}(X,L\otimes\mathscr{I}(\phi)).
\end{equation}
In particular, when $\phi$ is smooth, $\alpha\in\mathcal{H}^{n,q}_{\leqslant0}(X,L,\Delta_{\phi})$ if and only if $\alpha$ is $\Delta_{\phi}$-harmonic in the usual sense.
\begin{proof}
We use the de Rham--Weil isomorphism
\[
H^{n,q}(X,L\otimes\mathscr{I}(\phi))\cong\frac{\mathrm{Ker}\bar\partial\cap L^{n,q}_{(2)}(X,L)_{h,\omega}}{\mathrm{Im}\bar{\partial}}
\]
to represent a given cohomology class $[\alpha]\in H^{n,q}(X,L\otimes\mathscr{I}(\phi))$ by a $\bar{\partial}$-closed $L$-valued $(n,q)$-form $\alpha$ with $\|\alpha\|_{\phi,\omega}<\infty$. We denote $\alpha|_{Y}$ simply by $\alpha_{Y}$. Since $\tilde{\omega}\geqslant\omega$, it is easy to verify that
\[
|\alpha_{Y}|^{2}_{\phi_{\varepsilon},\tilde{\omega}}dV_{\tilde{\omega}}\leqslant|\alpha|^{2}_{\phi_{\varepsilon},\omega}dV_{\omega},
\]
which leads to inequality $\|\alpha_{Y}\|_{\phi_{\varepsilon},\tilde{\omega}}\leqslant\|\alpha\|_{\phi_{\varepsilon,\omega}}$ with $L^{2}$-norms. Hence by property (b), we have
$\|\alpha_{Y}\|_{\phi_{\varepsilon},\tilde{\omega}}\leqslant\|\alpha\|_{\phi,\omega}$ which implies
\[
\alpha_{Y}\in L^{n,q}_{(2)}(Y,L)_{\phi_{\varepsilon},\tilde{\omega}}.
\]
By decomposition (\ref{e13}), we have a harmonic representative $\alpha_{\varepsilon}$ in
\[
\mathcal{H}^{n,q}_{\phi_{\varepsilon,\tilde{\omega}}}(L),
\]
which means that $\Delta_{\varepsilon}\alpha_{\varepsilon}=0$ on $Y$ for all $\varepsilon$. Moreover, since a harmonic representative minimizes the $L^{2}$-norm, we have
\[
   \|\alpha_{\varepsilon}\|_{\phi_{\varepsilon},\tilde{\omega}}\leqslant\|\alpha_{Y}\|_{\phi_{\varepsilon},\tilde{\omega}}\leqslant \|\alpha\|_{\phi,\omega}.
\]
So we can take the limit $\tilde{\alpha}$ of (a subsequence of) $\{\alpha_{\varepsilon}\}$ such that
\[
\tilde{\alpha}\in[\alpha_{Y}].
\]
It is left to extend it to $X$.

Indeed, by (the proof of) Proposition 2.1 in \cite{Wu17}, there is an injective morphism, which maps $\tilde{\alpha}$ to a $\bar{\partial}$-closed $L$-valued $(n-q,0)$-form on $Y$ with bounded $L^{2}$-norm. We formally denote it by $\ast\tilde{\alpha}$. The canonical extension theorem applies here and $\ast\tilde{\alpha}$ extends to a $\bar{\partial}$-closed $L$-valued $(n-q,0)$-form on $X$, which is denoted by $S^{q}(\tilde{\alpha})$ in \cite{Wu17}. Furthermore, it is shown by Proposition 2.2 in \cite{Wu17} that $\hat{\alpha}:=c_{n-q}\omega_{q}\wedge S^{q}(\tilde{\alpha})$ is an $L$-valued $(n,q)$-form with
\[
  \hat{\alpha}|_{Y}=\tilde{\alpha}.
\]
Therefore we finally get an extension $\hat{\alpha}$ of $\tilde{\alpha}$. By definition,
\[
\hat{\alpha}\in\mathcal{H}^{n,q}_{\leqslant0}(X,L\otimes\mathscr{I}(\phi),\Delta_{\phi}).
\]
We denote this morphism by $i([\alpha])=\hat{\alpha}$.

On the other hand, for a given $\alpha\in\mathcal{H}^{n,q}_{\leqslant0}(X,L\otimes\mathscr{I}(\phi),\Delta_{\phi})$, by definition there exists an $\alpha_{\varepsilon}\in[\alpha_{Y}]$ with $\alpha_{\varepsilon}\in\mathcal{H}^{n,q}_{\phi_{\varepsilon,\tilde{\omega}}}(L)$ for every $\varepsilon$. In particular, $\bar{\partial}\alpha_{\varepsilon}=0$. So all of the $\alpha_{\varepsilon}$ together with $\alpha_{Y}$ define a common cohomology class $[\alpha_{Y}]$ in $H^{n,q}(Y,L\otimes\mathscr{I}(\phi))$. It is left to extend this class to $X$.

We use the $S^{q}$ again. It maps $[\alpha_{Y}]$ to
\[
S^{q}(\alpha_{Y})\in H^{0}(X,\Omega^{n-q}_{X}\otimes L\otimes\mathscr{I}(\phi)).
\]
Furthermore,
\[
   c_{n-q}\omega_{q}\wedge S^{q}(\alpha_{Y})\in H^{n,q}(X,L\otimes\mathscr{I}(\phi))
\]
with $[(c_{n-q}\omega_{q}\wedge S^{q}(\alpha_{Y}))|_{Y}]=[\alpha_{Y}]$. Here we use the fact that $\omega$ is a K\"{a}hler metric. We denote this morphism by $j(\alpha)=[c_{n-q}\omega_{q}\wedge S^{q}(\alpha_{Y})]$. It is easy to verify that $i\circ j=\textrm{id}$ and $j\circ i=\textrm{id}$. The proof is finished.
\end{proof}
\end{proposition}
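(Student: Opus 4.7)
The plan is to avoid working directly on $X$ with the singular metric $\phi$, and instead transfer the analysis to the Zariski open subset $Y := X \setminus Z$ where Demailly's approximation $\{\phi_\varepsilon\}$ is smooth. The hypothesis that a bounded section $s$ of $L^{k_0}$ exists is precisely what forces the pole-set $Z$ to be taken independently of $\varepsilon$ (property (a$'$)). I would then equip $Y$ with the complete Hermitian metric $\tilde\omega = \omega + \frac{1}{l}i\partial\bar\partial\psi^2 \geqslant \omega$ constructed before Definition~\ref{d51}, so that the orthogonal decomposition (\ref{e13}) becomes available on $(Y, \tilde\omega)$. The proof then produces mutually inverse maps $i$ and $j$ between the two sides by passing through the cohomology of $Y$ and using a canonical extension result to return to $X$.

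For the map $i: H^{n,q}(X, L \otimes \mathscr{I}(\phi)) \to \mathcal{H}^{n,q}_{\leqslant 0}(X, L \otimes \mathscr{I}(\phi), \Delta_\phi)$, I would choose an $L^2$-representative $\alpha$ of a given class via the de Rham--Weil isomorphism. The $(n,q)$-type volume-compensation inequality $|\alpha|^2_{\phi_\varepsilon, \tilde\omega} dV_{\tilde\omega} \leqslant |\alpha|^2_{\phi_\varepsilon, \omega} dV_\omega$, combined with $\phi_\varepsilon \geqslant \phi$ from property (b), shows that $\alpha|_Y \in L^{n,q}_{(2)}(Y, L)_{\phi_\varepsilon, \tilde\omega}$ uniformly in $\varepsilon$. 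Decomposition (\ref{e13}) then supplies a harmonic representative $\alpha_\varepsilon \in [\alpha|_Y]$ with $\Delta_\varepsilon \alpha_\varepsilon = 0$, and the norm-minimality property of harmonic forms bounds $\|\alpha_\varepsilon\|_{\phi_\varepsilon, \tilde\omega}$ uniformly by $\|\alpha\|_{\phi, \omega}$. Extracting a weak $L^2$-limit yields $\tilde\alpha$ on $Y$ cohomologous to $\alpha|_Y$. To upgrade $\tilde\alpha$ to a global form on $X$, I would invoke the canonical extension theorem (the $S^q$-operator) from \cite{Wu17}: the Hodge dual $\ast \tilde\alpha$ is a $\bar\partial$-closed $L$-valued $(n-q,0)$-form with bounded $L^2$-norm, which extends across $Z$ to a global section $S^q(\tilde\alpha) \in H^0(X, \Omega^{n-q}_X \otimes L \otimes \mathscr{I}(\phi))$, and then $\hat\alpha := c_{n-q}\omega_q \wedge S^q(\tilde\alpha)$ is the desired extension lying in $\mathcal{H}^{n,q}_{\leqslant 0}(X, L \otimes \mathscr{I}(\phi), \Delta_\phi)$.

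The inverse $j$ is built symmetrically: for $\alpha \in \mathcal{H}^{n,q}_{\leqslant 0}$, the approximating forms $\alpha_\varepsilon$ in Definition~\ref{d51} are pairwise cohomologous and $\bar\partial$-closed on $Y$, so they determine a single class $[\alpha|_Y] \in H^{n,q}(Y, L \otimes \mathscr{I}(\phi))$. Apply $S^q$ and wedge with $\omega_q$; here the K\"ahler hypothesis $d\omega = 0$ is essential so that the resulting $(n,q)$-form represents a well-defined Dolbeault class on $X$. The compositions $i \circ j$ and $j \circ i$ both restrict to the identity on $Y$, and the canonical extension theorem guarantees that restriction to $Y$ is injective on the relevant cohomology and harmonic-form spaces, so both compositions equal the identity on $X$.

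The main obstacle is the extension step across $Z$: one must ensure that the weak limit $\tilde\alpha$ on $Y$ genuinely descends from, and lifts to, a global form in the $L \otimes \mathscr{I}(\phi)$-valued class on $X$. This is precisely what the canonical extension theorem encodes, and its applicability rests both on the $L^2$-control established in the first paragraph and on the K\"ahler structure of $\omega$ (used to keep $\omega_q$ closed). Once this is in hand, the ``in particular'' statement about smooth $\phi$ is automatic: one may take $Z = \emptyset$ and $\phi_\varepsilon \equiv \phi$, so that Definition~\ref{d51} collapses to the classical condition $\Delta_\phi \alpha = 0$ on the compact manifold $X$.
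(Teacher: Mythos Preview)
Your proposal is correct and follows essentially the same route as the paper's proof: represent a class via de Rham--Weil, restrict to $Y$, use the $(n,q)$-volume inequality together with property (b) and the decomposition (\ref{e13}) to produce uniformly bounded harmonic representatives $\alpha_\varepsilon$, pass to a weak limit, and then invoke the $S^q$-operator from \cite{Wu17} (canonical extension of the $\bar\partial$-closed $(n-q,0)$-form $\ast\tilde\alpha$) followed by wedging with $\omega_q$ to return to $X$; the inverse $j$ and the verification of $i\circ j=j\circ i=\mathrm{id}$ are handled exactly as you describe. Your identification of where the K\"ahler hypothesis enters (closedness of $\omega_q$) and of why the ``in particular'' clause is immediate is also in line with the paper.
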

\begin{remark}\label{r51}
When $\phi$ has analytic singularities, we can use the same method as in Proposition \ref{p41} to extend this isomorphism to $\mathfrak{I}(\phi)$.
\end{remark}
\begin{remark}\label{r52}
In \cite{Mat14}, a similar result (Lemma 3.2) has been shown for a line bundle $(L,h)$ such that $h$ is smooth outside a subvariety and $i\Theta_{L,h}\geqslant0$. Our result, which benefits a lot form their work, generalizes it.
\end{remark}
Although Proposition \ref{p51} only holds for the $(n,q)$-form, we remark that the estimate for $h^{n,q}_{\leqslant0}(L^{k}\otimes E)$ is enough to get the estimate for all of the $\dim H^{p,q}(L^{k}\otimes E)$ since we can substitute $E$ by $E\otimes\Omega^{p}_{X}\otimes K^{-1}_{X}$. So it remains to prove an estimate for $h^{n,q}_{\leqslant0}(L^{k}\otimes E)$.
\subsection{The estimate for the order of the cohomology group}
Remember that our method highly depends on a submeanvalue inequality (Proposition \ref{p32}). However, there is a gap when proving such an inequality on an open subset.

In fact, in order to prove Proposition \ref{p32}, we first use the $\partial\bar{\partial}$-Bochner formula to reduce it to a ball with radius $r$ (Proposition \ref{p31}). If we want to get a similar inequality on $Y$, first we need to equip $Y$ with a complete Hermitian metric $\tilde{\omega}$. At this time, it is sort of like to estimate
\begin{equation}\label{e15}
   \int_{|z|<1}|F||\nabla h_{\varepsilon}|^{2}\tilde{\omega}_{n}.
\end{equation}
Here $F$ is the multiplier and describes the local rescalings of infinitesimally small coordinate charts. When the first derivative $\nabla h_{\varepsilon}$ becomes large as the point approaching $Z$ and $\varepsilon$ tending zero, to make the $L^{2}$-norm bounded, we have to enlarge the coordinate in that direction at that point. It is the same as collapsing the manifold along that direction at that point. When we fix our sight on the manifold, $\nabla h_{\varepsilon}$ blows up, but when we fix our sight on $\nabla h_{\varepsilon}$, the manifold collapses. As a result, the integral (\ref{e15}) is hard to control. So it is still an open question to get an estimate for $h^{n,q}_{\leqslant\lambda}(L^{k}\otimes E)$.

\address{

\small Current address: School of Mathematical Sciences, Fudan University, Shanghai 200433, People's Republic of China.

\small E-mail address: jingcaowu08@gmail.com, jingcaowu13@fudan.edu.cn
}

\end{document}